\PassOptionsToPackage{colorlinks,citecolor=magenta}{hyperref}

\documentclass[11pt,reqno]{amsart}
\usepackage[margin=1in]{geometry}
\usepackage{amssymb}
\usepackage{colonequals}
\usepackage{nccmath}
\usepackage{mathtools}
\usepackage{syntonly} 
\usepackage{mleftright}
\usepackage{makecell}
\usepackage{rotating}
\usepackage{caption}
\usepackage{lscape}
\usepackage{array}
\usepackage{tabularx,ragged2e,booktabs}
\DeclareMathOperator{\spann}{span}
\DeclareMathOperator{\ad}{ad}

\DeclareMathOperator{\Ric}{Ric}

\DeclareMathOperator{\tr}{tr}

\def \vvv {\quad , \quad}
\newcommand{\aaaa}[1]{\langle#1\rangle}

\usepackage{hyperref}
\usepackage{orcidlink}
\allowdisplaybreaks

\theoremstyle{definition}
\newtheorem{theorem}{Theorem}[section]
\newtheorem{lemma}[theorem]{Lemma}
\newtheorem{prop}[theorem]{Proposition}
\newtheorem{coro}[theorem]{Corollary}

\theoremstyle{definition}
\newtheorem{definition}[theorem]{Definition}
\newtheorem{example}[theorem]{Example}

\newtheorem{remark}[theorem]{Remark}

\numberwithin{equation}{section}

\begin{document}
\newcommand{\spacing}[1]{\renewcommand{\baselinestretch}{#1}\large\normalsize}
\spacing{1.14}

\title[On the Riemann-Finsler Geometry of Tangent Bundle of Lie Groups with\ldots{}]
{On the Riemann-Finsler Geometry of Tangent Bundle of Lie Groups with Two-Dimensional Commutator Subgroup}

\author{Ali Hatami Shahi\orcidlink{0009-0009-7177-7654}}

\address{Department of Pure Mathematics \\ Faculty of Mathematics and Statistics \\ University of \mbox{Isfahan}, Isfahan, Iran. \\ Orcid: 0009-0009-7177-7654} \email{a.hatami@sci.ui.ac.ir \hspace*{20mm} ali.hatami.math.ah@gmail.com}

\author {Hamid Reza Salimi Moghaddam\orcidlink{0000-0001-6112-4259}}

\address{Department of Pure Mathematics\\Faculty of Mathematics and Statistics\\University of \mbox{Isfahan}, Isfahan, Iran. \\ Orcid: 0000-0001-6112-4259}\email{hr.salimi@sci.ui.ac.ir \hspace*{20mm} salimi.moghaddam@gmail.com}


\keywords{Commutator subgroup, Tangent Lie group, Curvature, Geodesic vector field, Randers metric. \\
AMS 2020 Mathematics Subject Classification: 53C30, 53C60, 53B21.%
}

\date{\today}

\begin{abstract}
We begin by studying the Riemannian geometry of the tangent Lie group $TG$ associated with a Lie group $G$ whose commutator subgroup is two-dimensional, equipped with the lift of a left-invariant Riemannian metric on $G$. We establish the relationship between the sectional curvatures of $G$ and those of $TG$. Next, we define a Randers metric on $G$ from a left-invariant Riemannian metric and a left-invariant vector field, and lift it vertically and completely to $TG$. We investigate the conditions under which this Randers metric is of Berwald and Douglas type, respectively, and compute the flag curvatures in the Berwald case. In an addendum, we discuss geodesic vectors and bi-invariant Riemannian metrics on these Lie groups, highlighting the special unimodularity conditions. Finally, we provide explicit formulas for the Riemannian curvature tensor on the tangent bundle of such a Lie group.
\end{abstract}

\maketitle
\section{\bfseries Introduction}\label{s1}
\noindent
In this section, we provide a brief overview of the research history on Lie groups with one- and two-dimensional commutator subgroups, as well as the geometry of the tangent bundle of Riemannian and Finslerian manifolds, particularly the tangent bundle of Lie groups.

Lie groups with one-dimensional commutator subgroups were first introduced by Valerius Kaiser in 1994, in the course of investigating all possible signatures of the Ricci tensor of left-invariant Riemannian metrics on connected n-dimensional Lie groups possessing this property \cite{14}.
He showed that such groups are (up to local isomorphism) isomorphic to a direct product $H\times K$ of Lie groups $H$ and $K$, where $H$ is either one of the $(2m+1)$-dimensional Heisenberg nilpotent groups $H_m$, $m=1,2,\ldots, \left [\frac{n+1}{2}\right]$, consisting of all $(m+2)\times (m+2)$ matrices with 1's along the main diagonal and possibly nonzero entries only in the first row and the last column (see also \cite{10}), or the two-dimensional solvable affine group $A(1)$, consisting of all affine transformations of the real line $\mathbb{R}$; and $K$ is an abelian group.
In fact, he stated that, up to local isomorphism, there exist precisely $\left[\frac{n+1}{2}\right]$ non-isomorphic $n$-dimensional Lie groups with a one-dimensional commutator subgroup.
In 2020, V. A. Le et.al. studied solvable Lie algebras with two-dimensional derived ideals, and classified such Lie algebras by means of matrix theory \cite{17}.
In 2025, the second author of the present article investigated the geometry of left-invariant Riemannian metrics on Lie groups with one- and two-dimensional commutator subgroups and classified all algebraic Ricci solitons in such spaces. He also explicitly computed the Levi-Civita connections, sectional curvatures, and Ricci curvatures of Lie groups with two-dimensional commutator subgroups \cite{19}.

However, the study of the geometry of the tangent bundle of a Riemannian manifold was first initiated in 1958 by S. Sasaki, based on the decomposition of the tangent bundle of a Riemannian manifold into horizontal and vertical subbundles, together with the introduction of a (natural) metric on the tangent bundle \cite{20}.
Subsequently, in 1962, P. Dombrowski introduced the Lie brackets on the tangent bundle of a Riemannian manifold \cite{9}, and later, in 1971, O. Kowalski computed the Levi-Civita connections of Sasaki's metric as well as its Riemann curvature tensor \cite{15}.
Further geometric relations between a Riemannian manifold and its tangent bundle can be found in \cite{1} and \cite{18}.
In 1973, K. Yano and S. Ishihara investigated the geometry of the tangent bundle of a Riemannian manifold from the viewpoint of the vertical and complete subbundles (by means of vertical and complete lifts) \cite{21}.
Following this, numerous other works were devoted to the tangent bundle of a Riemannian manifold, among which one may mention the contributions of S. Gudmundsson and E. Kappos \cite{11}, the dissertation of F. Y. Hindeleh \cite{14}, and another paper by O. Kowalski and M. Sekizawa \cite{16}.
In particular, the Riemannian geometry of the tangent bundle of Lie groups and its relation to the geometry of the underlying Lie group was first examined by F. Asgari and H. R. Salimi Moghaddam in 2017 \cite{2}.
They also, in \cite{3}, analyzed the Riemannian geometry of the tangent bundle of Lie groups with one-dimensional commutator subgroups and presented the geometric quantities associated with this specific family of Lie groups.
Finally, in 2018, the latter authors studied the geometry of the tangent bundle of a Finslerian manifold by considering a left-invariant Randers metric on a base Lie group. They established the flag curvatures of the tangent bundle in the Berwald case, classified all three-dimensional Lie groups whose tangent bundles admit such a metric, and ultimately derived the geodesic vectors of these Lie groups \cite{4}.

The study of lifted Riemannian metrics on the tangent bundle of a manifold $M$ reveals intriguing geometric properties, particularly regarding their curvature when analyzed in the tangent Lie group of a Lie group. Notable contributions to this area can be found in the works of \cite{Peyghan-Seifipour-Blaga} and \cite{Seifipour-Peyghan}, which explore the relationship between the curvature of these lifted metrics and various geometric structures within the tangent bundle. Recently, the investigation has expanded to include the curvature of lifted Finsler metrics on tangent Lie groups, as documented in \cite{Atashafrouz-Najafi-Tayebi-1}, \cite{Atashafrouz-Najafi-Tayebi-2}, and \cite{4}. These studies illustrate how such Finsler metrics are intrinsically linked to their underlying base metrics.
In the present work, we first define a natural left-invariant Riemannian metric on the tangent bundle of a Lie group with a two-dimensional commutator subgroup, in such a way that the vertically and completely lifted vector fields are orthogonal with respect to this metric on the tangent bundle.
We then obtain the Levi-Civita connections, Riemann and sectional curvatures, as well as the Ricci tensor, on the tangent Lie group of these groups, and we establish the relationship between the sectional curvatures on the base Lie group and its tangent Lie group.
Next, we consider a Randers metric determined by a left-invariant Riemannian metric and a left-invariant vector field on G, lift it in the vertical and complete manners to the tangent bundle, and investigate the conditions under which they are of Berwald type. We then compute the flag curvatures in this setting. Finally, we present an example, calculate the aforementioned geometric quantities for it, and, in the appendix, we study the geodesic vectors and bi-invariant Riemannian metrics on these Lie groups, together with the specific conditions for unimodularity of the Lie group. We also provide the formulas for the Riemann curvature tensor on its tangent bundle.

\section{\bfseries Preliminaries}
\label{s2}
In this section, we present the preliminary definitions from Finsler geometry, vertical and complete lifts of vector fields and Riemannian metrics, as well as the necessary propositions to be used in the subsequent sections.

Let $M$ be a real smooth manifold of dimension $n$, and denote by $TM$ its tangent bundle. Given a vector field $X$ on $M$, one obtains a one-parameter family of diffeomorphisms of $TM$ defined by
$\psi_t(y)=y+tX(x)$, for $y\in T_xM$. The infinitesimal generator of this flow, $\psi_t$, is called the vertical lift of $X$ and is denoted by $X^v$.

In contrast, the infinitesimal generator of the (local) one-parameter group of diffeomorphisms $\phi_t$, where $T\phi_t:TM\longrightarrow TM$ is the flow on $M$ generated by $X$, is called the complete lift of $X$ and is denoted by $X^c$. For further discussion of vertical and complete lifts, see, e.g., \cite{13} and \cite{21}. It can be shown that for any two vector fields $X, Y$ we have:
\begin{align*}
& \left[X^c,Y^c\right]= [X,Y]^c, \\
& \left[X^c,Y^v\right]= [X,Y]^v, \\
& \left[X^v,Y^v\right]=0.
\end{align*}
Let $G$ be an $n$-dimensional connected Lie group with multiplication $\mu$, inversion $\iota$, and identity element $e$. For $x\in G$, denote by $l_x$ and $r_x$ the left and right translation maps, respectively.

Identifying $T(G\times G)$ with $TG\times TG$, the tangent map $T\mu$ at $(x, y)$ acts as $T\mu(v, w) = T_y l_x (w) + T_x r_y (v)$,
for $v\in T_x G$ and $w\in T_y G$. This defines a Lie group structure on $TG$, with identity element $0_e\in T_e G$ and inversion $T\iota$ (see, e.g., \cite{12} and \cite{13}).

Moreover, the complete and vertical lifts of any left-invariant vector field on $G$ are left-invariant vector fields on the Lie group $TG$. Indeed, if $\{X_1, ..., X_n\}$ is a basis for the Lie algebra $\frak{g}$ of $G$, then the set $\{X_1^v, ..., X_n^v, X_1^c, ..., X_n^c\}$ forms a basis for the Lie algebra $\tilde{\frak{g}}$ of $TG$.

Similar to the vector fields, one can lift a left-invariant Riemannian metric $g$ from $G$ to a left-invariant Riemannian metric $\widetilde{g}$ on $TG$ as follows (see \cite{2}):
\begin{align*}
& \widetilde{g}\left(X^c, Y^c\right)=g(X,Y), \\
& \widetilde{g}\left(X^v, Y^v\right)=g(X,Y), \\
& \widetilde{g}\left(X^c, Y^v\right)=0,
\end{align*}
where $X,Y\in\frak{g}$. In \cite{2} it is shown that if $\nabla$ and $\widetilde\nabla$ denote the Levi-Civita connections of $g$ and $\widetilde{g}$ respectively then for any two left-invariant vector fields $U$ and $V$ on $G$ we have:
\begin{align}\label{Levi-Civita for tangent general}
 \; & {\widetilde \nabla }_{U^c}V^c={\left( \nabla _UV\right)}^c,  &  {\widetilde \nabla }_{U^v}V^v={\left( \nabla _UV-\frac{1}{2}[U,V]\right)}^c, \\
 \; & {\widetilde \nabla }_{U^c}V^v={\left( \nabla _UV+\frac{1}{2} \ad ^*_VU\right)}^v, & {\widetilde \nabla }_{U^v}V^c={\left( \nabla _UV+\frac{1}{2} \ad ^*_UV\right)}^v.\nonumber
\end{align}

In this article we consider Lie groups whose commutator subgroups have dimension two. Here we give a brief description of them (for more details, see \cite{17} and \cite{19}).

Let $G$  be an $n$-dimensional Lie group with a two-dimensional commutator subgroup $G'$, and let $\mathfrak{g}'$ be the corresponding two-dimensional derived subalgebra of the Lie algebra $\mathfrak{g}$ of $G$. Suppose that $g$ is a left-invariant Riemannian metric on $G$, and $\left\langle \cdot, \cdot \right\rangle$ denotes the inner product induced on $\mathfrak{g}$. We choose the orthonormal basis $\{e_1, e_2\}$ for $\mathfrak{g}'$. Let $\mathcal{P}$ denote the ($n-2$)-dimensional subspace of $\mathfrak{g}$ orthogonal to $\spann\{e_1,e_2\}$ with respect to the $g$.
Then, for some linear maps $\varphi_1, \varphi_2, \psi_1, \psi_2 \colon \mathcal{P} \longrightarrow \mathbb{R}$, we have:
\begin{align*}
& [u,e_1]=\varphi_1\left(u\right)e_1+\varphi _2 (u) e_2, \\
& [u,e_2]= \psi_1 (u) e_1+{\psi }_2 (u) e_2.
\end{align*}
Assume that $a_1,a_2,b_1,b_2\in \mathcal{P}$ are the uniquely determined elements of $\mathcal{P}$ such that for every $u\in \mathcal{P}$,
\begin{align*}
& \varphi _1 (u) =\left\langle a_1,u\right\rangle , \varphi _2 (u) =\left\langle a_2,u\right\rangle,
\\
& {\psi }_1 (u) =\left\langle b_1,u\right\rangle , {\psi }_2 (u) =\left\langle b_2,u\right\rangle.
\shortintertext{Hence,}
& [u,e_1]=\left\langle a_1,u\right\rangle e_1+\left\langle a_2.u\right\rangle e_2,
\\
& [u,e_2]=\left\langle b_1,u\right\rangle e_1+\left\langle b_2,u\right\rangle e_2.
\end{align*}
Suppose that for $u,v\in \mathcal{P}$ we have $ [u,v] =B_1 (u,v) e_1+B_2 (u,v) e_2$ where $B_1 (u,v) $ and $B_2 (u,v) $ are skew-symmetric bilinear forms on the subspace $\mathcal{P}$. Then there exist uniquely skew-symmetric linear maps $f_1,f_2: \mathcal{P}\longrightarrow \mathcal{P}$ such that for all $u,v\in \mathcal{P}$,
\begin{align*}
& B_1 (u,v) =\left\langle f_1 (u) ,v\right\rangle, \quad B_2 (u,v) =\left\langle f_2 (u) ,v\right\rangle.
\shortintertext{Therefore,}
&[u,v] =\left\langle f_1 (u) ,v\right\rangle e_1+\left\langle f_2 (u) ,v\right\rangle e_2, \qquad \forall u,v\in \mathcal{P}.
\end{align*}
Using the Jacobi identity and the Cauchy-Schwarz inequality, we can show that $b_1=\lambda a_2$, for some $\lambda\in\Bbb{R}$. But we do not use this equation to show the symmetry of the formulas.
\begin{prop}\label{t2.22}
\cite{19}.
For an $n$-dimensional Lie group with a two-dimensional commutator subgroup equipped with a left-invariant metric $g$, the Levi-Civita connection and the sectional curvatures are given by:
\begin{align*}
1) \; &
 \nabla _{e_1}e_1=a_1 \vvv \nabla _{e_1}e_2=\frac{1}{2}\left(a_2+b_1\right) \vvv \nabla _{e_1}u=-\left(\langle a_1,u\rangle e_1+\frac{1}{2}\left(\langle b_1+a_2,u\rangle e_2+f_1 (u) \right)\right) & \\
& \nabla _{e_2}e_1=\frac{1}{2}\left(a_2+b_1\right) \vvv \nabla _{e_2}e_2=b_2 \vvv \nabla _{e_2}u=-\left(\langle b_2,u\rangle e_2+\frac{1}{2}\left(\langle b_1+a_2,u\rangle e_1+f_2 (u) \right)\right) & \\
& \nabla _ve_1=\frac{1}{2}\left(\langle a_2-b_1,v\rangle e_2-f_1(v)\right) \vvv\nabla _ve_2=\frac{1}{2}\left(\langle b_1-a_2,v\rangle e_1-f_2(v)\right) & \\
& \nabla _vu=\frac{1}{2}\left(\langle f_1(v),u\rangle e_1+\langle f_2(v),u\rangle e_2\right) &
\end{align*}
where $u,v\in \mathcal{P}$, and
\begin{flalign*}
2) \; & K\left(e_1,e_2\right)=\frac{1}{4}\| a_2+b_1\|^2-\langle a_1,b_2\rangle, \\
& K\left(e_1,u\right)=\frac{1}{4}\left({\langle b_1,u\rangle }^2-3{\langle a_2,u\rangle }^2+{\| f_1 (u) \| }^2\right)-{\langle a_1,u\rangle }^2-\frac{1}{2}\langle a_2,u\rangle \langle b_1,u\rangle, & \\
& K\left(e_2,u\right)=\frac{1}{4}\left({\langle a_2,u\rangle }^2-3{\langle b_1,u\rangle }^2+{\| f_2 (u) \|}^2\right)-{\langle b_2,u\rangle }^2-\frac{1}{2}\langle a_2,u\rangle \langle b_1,u\rangle, & \\
& K (u,v) = -\frac{3}{4}\left({\langle u,f_1(v)\rangle }^2+{\langle u,f_2(v)\rangle }^2\right), &
\end{flalign*}
where $u,v\in \mathcal{P}$ are arbitrary orthonormal vectors and ${\left\|\cdot \right\|}^2=\left\langle \cdot , \cdot \right\rangle $.
\end{prop}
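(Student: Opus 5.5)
The plan is to derive the Levi-Civita connection from the Koszul formula for left-invariant vector fields, and then compute the sectional curvatures from the connection.

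For left-invariant $X,Y,Z$ on $G$ the Koszul formula reads
\begin{align*}
2\langle \nabla_X Y, Z\rangle = \langle [X,Y],Z\rangle - \langle [Y,Z],X\rangle + \langle [Z,X],Y\rangle .
\end{align*}
We apply it to all combinations of $e_1$, $e_2$ and $u,v\in\mathcal{P}$, with test vector $Z\in\{e_1,e_2\}$ or $Z=w\in\mathcal{P}$ arbitrary. The only bracket data we need are the following:
\begin{itemize}
\item $[u,e_1]=\langle a_1,u\rangle e_1+\langle a_2,u\rangle e_2$;
\item $[u,e_2]=\langle b_1,u\rangle e_1+\langle b_2,u\rangle e_2$;
\item $[u,v]=\langle f_1(u),v\rangle e_1+\langle f_2(u),v\rangle e_2$;
\item $[e_1,e_2]$, which lies in $\mathfrak{g}'$.
\end{itemize}

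We will need to settle $[e_1,e_2]$ first. Since $\mathfrak{g}'$ is two-dimensional and the formulas contain no term for it, I would show $[e_1,e_2]=0$: a two-dimensional Lie algebra equal to part of the derived algebra of $\mathfrak{g}$ must be abelian here. The argument would use that $\mathfrak{g}$ is solvable in this setting, which is the classification in \cite{17}, so $\mathfrak{g}'$ is nilpotent, and a two-dimensional nilpotent Lie algebra is abelian. This agrees with the structure of \cite{19}.

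With $[e_1,e_2]=0$ each Koszul evaluation is a one-line computation. For example:
\begin{itemize}
\item $2\langle\nabla_{e_1}e_1,w\rangle=2\langle[w,e_1],e_1\rangle=2\langle a_1,w\rangle$, and the $e_1,e_2$ components vanish, giving $\nabla_{e_1}e_1=a_1$.
\item $2\langle\nabla_{e_1}e_2,w\rangle=\langle[w,e_1],e_2\rangle+\langle[w,e_2],e_1\rangle=\langle a_2+b_1,w\rangle$.
\item For $\nabla_v u$, the only nonzero pairings are with $e_i$, giving $\tfrac12\langle f_i(v),u\rangle$, by skew-symmetry of $f_i$.
\end{itemize}
The mixed terms $\nabla_{e_1}u$ and $\nabla_v e_1$ follow similarly, and torsion-freeness, $\nabla_{e_1}u-\nabla_u e_1=[e_1,u]$, serves as a consistency check.

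For part 2 we use $K(X,Y)=\langle R(X,Y)Y,X\rangle$ for orthonormal $X,Y$, where
\begin{align*}
R(X,Y)=\nabla_X\nabla_Y-\nabla_Y\nabla_X-\nabla_{[X,Y]} ,
\end{align*}
and all terms are expanded using part 1. Since left-invariant fields have constant inner products, it is more economical to write
\begin{align*}
\langle \nabla_X\nabla_Y Y,X\rangle=-\langle\nabla_YY,\nabla_XX\rangle ,
\end{align*}
which follows from $\langle \nabla_XZ,W\rangle=-\langle Z,\nabla_XW\rangle$ and holds for Killing-type reasoning only when $X$ is a left-invariant field on a bi-invariant group. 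To avoid that trap, I would instead use the standard formula for left-invariant metrics (Milnor's or Besse's):
\begin{align*}
K(X,Y)={}&-\tfrac34\|[X,Y]\|^2-\tfrac12\langle[X,[X,Y]],Y\rangle-\tfrac12\langle[Y,[Y,X]],X\rangle\\
&+\tfrac14\|U(X,Y)\|^2-\langle U(X,X),U(Y,Y)\rangle ,
\end{align*}
where $\langle U(X,Y),Z\rangle=\tfrac12\big(\langle[Z,X],Y\rangle+\langle X,[Z,Y]\rangle\big)$.

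Part 1 already supplies $U$, since $U(X,Y)$ is the symmetric part of $\nabla_XY$. Concretely:
\begin{itemize}
\item $U(e_1,e_1)=a_1$ and $U(e_2,e_2)=b_2$.
\item $U(e_1,e_2)=\tfrac12(a_2+b_1)$.
\item $U(u,u)=-\langle a_1,u\rangle e_1-\langle b_2,u\rangle e_2$, with no $\mathcal{P}$-component, since $\langle[w,u],u\rangle=0$.
\item $U(e_1,u)=-\tfrac12\langle a_2+b_1,u\rangle e_2$ (times $\tfrac12$ conventions checked), plus a $\mathcal{P}$ part of $-\tfrac12 f_1(u)$ folded appropriately.
\end{itemize}

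Each case then reduces as follows:
\begin{itemize}
\item $K(e_1,e_2)$: every bracket term vanishes, leaving $\tfrac14\|a_2+b_1\|^2-\langle a_1,b_2\rangle$.
\item $K(u,v)$: the terms $U(u,u)$ and $U(v,v)$ lie in $\mathfrak{g}'$, and the double brackets $[u,[u,v]]$ lie in $\mathfrak{g}'$ and pair to zero with $v$. We also have $U(u,v)\in\mathfrak{g}'$, with components $\tfrac12\langle$bracket terms$\rangle$. Combining these should collapse to $-\tfrac34\sum_i\langle u,f_i(v)\rangle^2$, possibly after cancellation of the $a,b$ contributions, which needs checking.
\item $K(e_i,u)$: this is the messiest case. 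Here $[u,e_1]$, $[u,[u,e_1]]$ and $\|U(e_1,u)\|^2$ all contribute, producing the quadratic terms in $\langle a_1,u\rangle$, $\langle a_2,u\rangle$, $\langle b_1,u\rangle$, together with $\tfrac14\|f_1(u)\|^2$ coming from the $\mathcal{P}$-component of $U(e_1,u)$.
\end{itemize}

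I expect the main obstacle to be this bookkeeping in $K(e_i,u)$ and $K(u,v)$, in particular getting the coefficients $-3/4$ and $-1/2$ right when the $\tfrac12(a_2\pm b_1)$ pieces combine. A secondary point is justifying $[e_1,e_2]=0$, which the stated formulas implicitly assume and which I would take from \cite{19}.
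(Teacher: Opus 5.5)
The paper imports this proposition from its reference [19] without proof, so there is no route to compare against. Your approach is sound in principle, but the execution of part~2 has a genuine gap.

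Part~1 is fine. Your Koszul evaluations reproduce every stated covariant derivative. Also, $[e_1,e_2]=0$ needs no classification: a two-dimensional $\mathfrak g'$ is solvable, so $\mathfrak g$ is solvable, so $\mathfrak g'$ is nilpotent, hence abelian.

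In part~2, both the formula you invoke and the data you feed it are wrong, so the curvatures you assert do not follow from what you wrote. First, with your normalization $\langle U(X,Y),Z\rangle=\frac12(\langle[Z,X],Y\rangle+\langle X,[Z,Y]\rangle)$, i.e.\ $\nabla_XY=\frac12[X,Y]+U(X,Y)$, the Besse formula has $+\|U(X,Y)\|^2$, not $+\frac14\|U(X,Y)\|^2$. Your version, with your (correct) $U(e_1,e_2)=\frac12(a_2+b_1)$, gives $\frac1{16}\|a_2+b_1\|^2-\langle a_1,b_2\rangle$, not the value you state. Second, $U(u,u)=0$: for every $Z$, $\langle U(u,u),Z\rangle=\langle[Z,u],u\rangle=0$ because $[Z,u]\in\mathfrak g'\perp u$. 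Equivalently, $\nabla_uu=0$ from part~1 by skew-symmetry of $f_i$, and likewise $U(u,v)=0$ outright. With your value of $U(u,u)$, $K(u,v)$ acquires $-\langle a_1,u\rangle\langle a_1,v\rangle-\langle b_2,u\rangle\langle b_2,v\rangle$, which nothing cancels. The cancellation you hope for does not exist; those terms are simply absent. Third, $U(e_1,u)=-\frac12\big(\langle a_1,u\rangle e_1+\langle b_1,u\rangle e_2+f_1(u)\big)$. You dropped the $e_1$-component and put $a_2+b_1$ where only $b_1$ belongs, and both errors enter $\|U(e_1,u)\|^2$.

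With the corrected data the computation closes. For $K(e_1,u)$ one has $\|[e_1,u]\|^2=\langle a_1,u\rangle^2+\langle a_2,u\rangle^2$, $[e_1,[e_1,u]]=0$, $\langle[u,[u,e_1]],e_1\rangle=\langle a_1,u\rangle^2+\langle a_2,u\rangle\langle b_1,u\rangle$, $\|U(e_1,u)\|^2=\frac14\big(\langle a_1,u\rangle^2+\langle b_1,u\rangle^2+\|f_1(u)\|^2\big)$ and $\langle U(e_1,e_1),U(u,u)\rangle=0$. Summing gives exactly the stated value (the coefficient of $\langle a_1,u\rangle^2$ is $-\frac34-\frac12+\frac14=-1$). $K(e_2,u)$ is symmetric, and $K(u,v)=-\frac34\|[u,v]\|^2$.

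Note also that the ``trap'' you steered around is not one. For left-invariant $X,Y$ the function $\langle\nabla_YY,X\rangle$ is constant, so $\langle\nabla_X\nabla_YY,X\rangle=-\langle\nabla_YY,\nabla_XX\rangle$ holds for every left-invariant metric; what needs bi-invariance is $\nabla_XY=\frac12[X,Y]$. Hence, for orthonormal left-invariant $X,Y$, $K(X,Y)=\langle\nabla_XY,\nabla_YX\rangle-\langle\nabla_XX,\nabla_YY\rangle-\langle\nabla_{[X,Y]}Y,X\rangle$. Plugging in part~1 is the shortest route, with no formula to misremember: $K(e_1,e_2)=\frac14\|a_2+b_1\|^2-\langle a_1,b_2\rangle$ in one line, and $K(u,v)=-\frac14S-0-\frac12S$ with $S=\langle f_1(u),v\rangle^2+\langle f_2(u),v\rangle^2$.
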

We now present the preliminaries of Finsler geometry.
\begin{definition}\label{t2.24}
\cite{6,7}.
A Randers metric on a manifold $M$ is a Finsler metric of the following form, with the condition ${\left\|\beta \right\|}_{\alpha }<1$ (equivalently, ${\left\|X\right\|}_{\alpha }<1$):
\[F_x\left(y\right)=\alpha (x,y)+\beta (x,y) =\alpha (x,y)+g\left(X_x,y\right),\]
where $\alpha (x,y) =\sqrt{g_x(y,y)}$ is the Riemannian norm and $\beta =b_idx^i$ is a 1-form such that $\beta (x,y) =b_iy^i$. Also, $X$ denotes the vector field on $M$ corresponding to the 1--form $\beta $.
\end{definition}
\begin{definition}\label{t2.25}
\cite{6}.
A Finsler metric $F$ on a smooth manifold $M$ is said to be of Berwald type if, in the standard local coordinates $\left(x^i,y^i\right)$ on $TM\setminus \{0\}$, the Christoffel symbols $\Gamma^i_{jk}$ of the Chern connection depend only on $x\in M$ (i.e., are independent of $y$).
\end{definition}
\begin{lemma}\label{t2.26}
\cite{7}.
Suppose $F_x\left(y\right)=\alpha (x,y)+\beta (x,y) =\alpha (x,y) +g\left(X_x,y\right)$ is a Randers metric. Then the following statements are equivalent:
\begin{enumerate}
\item
$F$ is of Berwald type.
\item
$\beta $ is parallel with respect to $\alpha $ (equivalently, the vector field $X$ is parallel with respect to the Riemannian metric $\alpha $).
\end{enumerate}
\end{lemma}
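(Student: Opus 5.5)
The equivalence in Lemma~\ref{t2.26} is cited from \cite{7}. I would prove it by comparing the geodesic spray of $F=\alpha+\beta$ with that of $\alpha$.

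Recall that $F$ is Berwald precisely when its spray coefficients $G^i(x,y)$ are quadratic in $y$, since $\Gamma^i_{jk}=\partial^2 G^i/\partial y^j\partial y^k$. Write $r_{ij}=\frac12(b_{i|j}+b_{j|i})$ and $s_{ij}=\frac12(b_{i|j}-b_{j|i})$, where $|$ denotes covariant differentiation with respect to $\alpha$. Then use the standard formula for Randers sprays:
\[
G^i=G^i_\alpha+\frac{e_{00}}{2F}\,y^i-s_0\,y^i+\alpha\, s^i{}_0,\qquad e_{00}=r_{00}+2\beta s_0 .
\]
Here the index $0$ means contraction with $y$, and $G^i_\alpha=\frac12\gamma^i_{jk}y^jy^k$ is quadratic. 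The direction (2)$\Rightarrow$(1) is then immediate. If $b_{i|j}=0$, then $r_{ij}=s_{ij}=0$, so $G^i=G^i_\alpha$. This is quadratic, so $\Gamma^i_{jk}=\gamma^i_{jk}$ depends only on $x$ and $F$ is Berwald.

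For (1)$\Rightarrow$(2), assume $G^i$ is quadratic in $y$. Then $Q^i:=G^i-G^i_\alpha$ is quadratic. The terms $-s_0y^i$ and $\frac{e_{00}}{2F}y^i$ are rational in $(y,\alpha)$, and $\alpha s^i{}_0$ is irrational. Because $\alpha$ is not a rational function of $y$, one separates the rational and irrational parts, writing everything in the form $A+\alpha B$ with $A,B$ polynomial in $y$. Multiplying by $F=\alpha+\beta$ and comparing the coefficients of $\alpha$ gives two identities. The $\alpha$-part forces $s^i{}_0=0$, so $s_{ij}=0$. With $s_{ij}=0$, the remaining condition says $\frac{r_{00}}{2F}y^i$ is quadratic. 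This forces $r_{00}=c\,(\alpha^2-\beta^2)$ type relations in general; more carefully, $r_{00}y^i=2F\,q^i$ with $q^i$ quadratic. Separating again gives $r_{00}y^i=2\beta q^i$ and $\alpha q^i=0$, hence $q^i=0$ and $r_{00}=0$. Therefore $b_{i|j}=0$.

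The main obstacle is this rational/irrational separation step. It needs the fact that $\alpha^2$ is not a perfect square of a linear form when $n\ge2$, and the case $n=1$ has to be handled separately. There, every metric is trivially Berwald in a suitable sense, so the equivalence should be interpreted via the condition $\|\beta\|_\alpha<1$ and checked directly.

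Finally, the parenthetical equivalence in (2) follows because $\beta=g(X,\cdot)$, and the musical isomorphism commutes with the Levi-Civita connection. Hence $\nabla\beta=0$ if and only if $\nabla X=0$.
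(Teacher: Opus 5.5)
The spray comparison you propose is the standard route to this result; the paper gives no proof and only cites it. Your direction (2)$\Rightarrow$(1) is fine. The gap is in (1)$\Rightarrow$(2), at the sentence ``the $\alpha$-part forces $s^i{}_0=0$''. Carry out the multiplication by $2F$ that you describe:
\[
2\alpha Q^i+2\beta Q^i=r_{00}\,y^i+2\alpha^2 s^i{}_0+\alpha\bigl(2\beta s^i{}_0-2s_0\,y^i\bigr).
\]
The coefficient of $\alpha$ is then just $Q^i=\beta s^i{}_0-s_0y^i$. This only identifies the unknown quadratic $Q^i$ and puts no constraint whatsoever on $s_{ij}$. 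So, as written, that step does not follow.

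To repair it, substitute this expression for $Q^i$ into the rational part. That gives $e_{00}\,y^i=-2(\alpha^2-\beta^2)\,s^i{}_0$, and even this identity needs two further facts that your proposal never invokes.
\begin{itemize}
\item Contract with $y_i=a_{ij}y^j$. By skew-symmetry, $y_i s^i{}_0=s_{ij}y^iy^j=0$, so $e_{00}\alpha^2=0$ and hence $e_{00}=0$.
\item Since $\|\beta\|_\alpha<1$, we have $\alpha^2-\beta^2>0$ for $y\neq0$. Therefore $s^i{}_0=0$, so $s_{ij}=0$ and $s_0=0$, and then $r_{00}=e_{00}-2\beta s_0=0$.
\end{itemize}
This already gives $b_{i|j}=0$ (and $Q^i=0$). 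Your second separation and the aside about ``$r_{00}=c(\alpha^2-\beta^2)$'' are therefore unnecessary once this step is done correctly.

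Your $n=1$ remark is also inaccurate. $F=\sqrt{a}\,|y|+b\,y$ is not automatically Berwald. On $y>0$ and on $y<0$ one finds $\Gamma^1_{11}=c_\pm'/c_\pm$ with $c_\pm=\sqrt a\pm b$. These agree iff $b/\sqrt a$ is constant, which is exactly the parallelism of $\beta$, since $b_{1|1}=\sqrt a\,(b/\sqrt a)'$. So the lemma does hold for $n=1$, but by this direct check.
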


S. Bacso and M. Matsumoto, as a generalization of Berwald-type metrics from the viewpoint of geodesic equations, considered Douglas-type metrics (for the definition see \cite{5}).
\begin{definition}\label{t2.27}
A Randers metric $F=\alpha +\beta $ is said to be of Douglas type if the 1--form $\beta $ is closed. Equivalently, $d\beta =0$.
\end{definition}
S.Deng, however, in his book \cite{7}, studied the Douglas property of Randers metrics and subsequently provided a practical necessary and sufficient condition for such Finsler metrics to be of Douglas type on homogeneous manifolds. The necessary and sufficient condition for the special case of Lie groups is given below.
\begin{prop}\label{t2.28}
A Randers metric $F$ is of Douglas type if and only if, for all $Y,Z\in \mathfrak{g}$, $g\left(X, [Y,Z] \right)=0$
\end{prop}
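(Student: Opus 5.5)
The plan is to reduce the Douglas condition $d\beta=0$ to an algebraic identity on $\mathfrak{g}$, using left-invariance and the Cartan formula for the exterior derivative of a $1$-form. Here $X$ is a left-invariant vector field with $\|X\|_\alpha<1$, and $\beta(\cdot)=g(X,\cdot)$ is its dual $1$-form; since $g$ is left-invariant, $\beta$ is a left-invariant $1$-form. By Definition \ref{t2.27}, $F=\alpha+\beta$ is of Douglas type exactly when $d\beta=0$.

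First, for arbitrary smooth vector fields $Y,Z$ on $G$ we have the standard formula
\[
d\beta(Y,Z)=Y\big(\beta(Z)\big)-Z\big(\beta(Y)\big)-\beta([Y,Z]).
\]
Since $d\beta$ is a tensor, it vanishes identically if and only if it vanishes on a global frame. We therefore take the frame of left-invariant vector fields, i.e. $Y,Z\in\mathfrak{g}$. For such $Y,Z$, the functions $\beta(Z)=g(X,Z)$ and $\beta(Y)=g(X,Y)$ are inner products of left-invariant fields with respect to a left-invariant metric, hence constant on $G$. Their derivatives along $Y$ and $Z$ are therefore zero, and the formula reduces to
\[
d\beta(Y,Z)=-\,g\big(X,[Y,Z]\big),\qquad Y,Z\in\mathfrak{g}.
\]

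It follows that $d\beta=0$ if and only if $g(X,[Y,Z])=0$ for all $Y,Z\in\mathfrak{g}$, which is the claim. Equivalently, the condition says that $X$ is orthogonal to the derived algebra $\mathfrak{g}'$. In the present setting, where $\mathfrak{g}'=\spann\{e_1,e_2\}$, this means $\langle X,e_1\rangle=\langle X,e_2\rangle=0$, i.e. $X\in\mathcal{P}$; this is the form in which the proposition will be applied later.

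The only step needing care is the reduction from arbitrary vector fields to left-invariant ones, and this is justified by the tensoriality of $d\beta$ together with the fact that a basis of $\mathfrak{g}$ is a global frame on $G$. Nothing else in the argument is an obstacle.
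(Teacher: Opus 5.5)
Your proof is correct. For left-invariant $X$, the form $\beta=g(X,\cdot)$ is left-invariant, and the Cartan formula on a left-invariant frame gives $d\beta(Y,Z)=-g(X,[Y,Z])$. So $d\beta=0$, which is the paper's Definition~\ref{t2.27} of Douglas type, holds exactly when $X\perp[\mathfrak{g},\mathfrak{g}]$.

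The paper does not prove this proposition. It imports the criterion from Deng's treatment of homogeneous Randers spaces and specializes it to Lie groups, and your computation is the standard derivation behind that result. Two small points are worth stating explicitly. First, the argument uses left-invariance of $X$ (and of $g$); the statement leaves this implicit, and it is necessary. Second, identifying Douglas type with $d\beta=0$ is taken as a definition in this paper, whereas in general it is the B\'acs\'o--Matsumoto theorem.
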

\section{\bfseries The Riemann-Finsler Geometry of Tangent Lie Groups Having Two-Dimensional Commutator Subgroups}
\label{s.3}
In this section, we study and analyze the Riemann-Finsler geometry of tangent Lie groups associated with the family of Lie groups having two-dimensional commutator subgroups, and we compute some of their related geometric quantities.

Let $G$ be an $n$-dimensional Lie group with Lie algebra $\mathfrak{g}$ and a two-dimensional derived subalgebra ${\mathfrak{g}}'=\spann\left\{e_1,e_2\right\}$, and let $g$ be an arbitrary left-invariant Riemannian metric on $G$ such that $e_1$ and $e_2$ are orthonormal with respect to $g$. Also, let $\mathcal{P}=\spann\left\{u_1,u_2,\ldots ,u_{n-2}\right\}$ be an ($n-2$)-dimensional subspace of $\mathfrak{g}$, generated by orthonormal vectors ${\left\{u_i\right\}}^{n-2}_{i=1}$ with respect to the same metric, such that ${\mathfrak{g}}'{\bot }_g\mathcal{P}$ and $\mathfrak{g}={\mathfrak{g}}'\oplus \mathcal{P}$. Using complete and vertical lifts of vector fields, we construct a natural Riemannian metric on the tangent Lie group $TG$, which is clearly left-invariant. Let $\widetilde \nabla $ denote the Levi-Civita connection on the tangent bundle $TG$.

In the first step, we have the following lemma, which provides explicit formulas for these Levi-Civita connections on the tangent bundle of such Lie groups.
\begin{lemma}\label{t3.1}
Let $G$ be a Lie group with a two-dimensional commutator subalgebra, and $g$ a left-invariant Riemannian metric on $G$.
Let $\tilde{g}$ be the natural lifted left-invariant metric on $TG$, and let $\widetilde{\nabla}$ be the Levi-Civita connection with respect to $\tilde{g}$. Then, on $TG$ we have (see the table~\ref{j.1}), where $u$ and $v$ are arbitrary elements of the ($n-2$)-dimensional subspace $\mathcal{P}$ of the Lie algebra $\mathfrak{g}$.
\begin{table}[!htb]
\caption{The Levi-Civita connection of the tangent Lie group}
{{\centering\label{j.1}%
\setlength{\arraycolsep}{2pt}%
\resizebox{\textwidth}{!}%
{\renewcommand{\arraystretch}{2}%
$\begin{array}{|c|c|c|c|c|c|c|} \hline
 \tilde\nabla & e^c_1 & e^v_1 & e^c_2 & e^v_2 & u^c & u^v
 \\ \hline
 e^c_1 & a^c_1 & \frac{1}{2}a^v_1 & \frac{1}{2}\left(a^c_2+b^c_1\right) & \frac{1}{2}a^v_2 &
\makecell{-\big(\aaaa{a_1,u}e_1^c+\\\frac{1}{2}\aaaa{b_1+a_2,u}e_2^c+\\\frac{1}{2}(f_1(u))^c\big)} & \makecell{-\frac{1}{2} \big(\aaaa{a_1,u} e_1^v +\\ \aaaa{a_2,u} e_2^v\big)}
 \\ \hline
 e^v_1 & \frac{1}{2}a^v_1 & a^c_1 & \frac{1}{2}b^v_1 & \frac{1}{2}\left(a^c_2+b^c_1\right) & \makecell{-\big(\aaaa{a_1,u}e_1^v+\\\frac{1}{2}\aaaa{b_1+a_2,u} e_2^v + \\\frac{1}{2} (f_1(u))^v\big)} & \makecell{-\frac{1}{2} \big(\aaaa{a_1,u}e_1^c+\\\aaaa{b_1,u}e_2^c+(f_1(u))^c\big)}
 \\ \hline
 e^c_2 & \frac{1}{2}\left(a^c_2+b^c_1\right) & \frac{1}{2}b^v_1 & b^c_2 & \frac{1}{2}b^v_2 & \makecell{-\big(\frac{1}{2}\aaaa{b_1+a_2,u}e_1^c+\\\aaaa{b_2,u}e_2^c+\\\frac{1}{2}(f_2(u))^c\big)} & \makecell{-\frac{1}{2} \big(\aaaa{b_1,u}e_1^v+\\\aaaa{b_2,u}e_2^v\big)}
 \\ \hline
 e^v_2 & \frac{1}{2}a^v_2 & \frac{1}{2}\left(a^c_2+b^c_1\right) & \frac{1}{2}b^v_2 & b^c_2 & \makecell{-\big(\frac{1}{2}\aaaa{b_1+a_2,u}e_1^v+\\\aaaa{b_2,u}e_2^v+\\\frac{1}{2}(f_2(u))^v\big)} & \makecell{-\frac{1}{2} \big(\aaaa{a_2,u}e_1^c+\\\aaaa{b_2,u}e_2^c+\\(f_2(u))^c\big)}
 \\ \hline
v^c &  \makecell{\frac{1}{2}\big(\aaaa{a_2-b_1,v}e_2^c\\-(f_1(v))^c\big)} & \makecell{\frac{1}{2} \big(\aaaa{a_2-b_1,v}e_2^v\\-(f_1(v))^v\big)} & \makecell{\frac{1}{2} \big(\aaaa{b_1-a_2,v}e_1^c\\-(f_2(v))^c\big)} &
\makecell{\frac{1}{2} \big(\aaaa{b_1-a_2,v} e_1^v-\\(f_2(v))^v\big)} & \makecell{\frac{1}{2} \big(\aaaa{a_2,v}e_1^c \\+\aaaa{b_2,v}e_2^c +\\(f_2(v))^c\big)} & \makecell{\frac{1}{2} \big(\aaaa{f_1(v),u}e_1^v + \\\aaaa{f_2(v),u}e_2^v\big)}
 \\ \hline
v^v & \makecell{\frac{1}{2} \big(\aaaa{a_1,v}e_1^v+\\\aaaa{a_2,v}e_2^v\big)} & \makecell{-\frac{1}{2} \big(\aaaa{a_1,v}e_1^c+\\\aaaa{b_1,v}e_2^c+(f_1(v))^c\big)} & \makecell{\frac{1}{2} \big( \aaaa{b_1,v}e_1^v+\\\aaaa{b_2,v}e_2^v\big)} & \makecell{-\frac{1}{2} \big( \aaaa{a_2,v}e_1^c+\\\aaaa{b_2,v}e_2^c+(f_2(v))^c\big)} & \makecell{\frac{1}{2} \big(\aaaa{f_1(v),u}e_1^v+\\\aaaa{f_2(v),u}e_2^v\big)} & 0
\\ \hline
\end{array}$}
}}
\end{table}
\noindent
\end{lemma}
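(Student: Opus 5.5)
The plan is to substitute directly into the general formulas \eqref{Levi-Civita for tangent general} of \cite{2}. These express every covariant derivative $\widetilde\nabla_{U^\ast}V^\ast$ (with $\ast\in\{c,v\}$) through three ingredients on $G$: $\nabla_UV$, $[U,V]$ and $\ad^*_VU$ (or $\ad^*_UV$). The first is supplied by Proposition~\ref{t2.22}, and the brackets are the structure relations of $\mathfrak{g}$. So the only genuinely new input is $\ad^*$, defined by $\langle \ad^*_XY,Z\rangle=\langle Y,[X,Z]\rangle$. Since complete and vertical lifts are linear in the vector field, each entry of Table~\ref{j.1} is then just the lift of a vector in $\mathfrak{g}$, and it suffices to treat $U,V\in\{e_1,e_2,u,v\}$.

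The first step is to tabulate $\ad^*_X Y$ for $X,Y\in\{e_1,e_2\}\cup\mathcal{P}$. We pair against the orthonormal basis $\{e_1,e_2\}\cup\{u_i\}$, using $[e_1,e_2]=0$ (the derived algebra is abelian, being two-dimensional and spanned by $e_1,e_2$ with brackets inside it), $[u,e_i]$, and $[u,v]=\langle f_1(u),v\rangle e_1+\langle f_2(u),v\rangle e_2$. Examples:
\begin{itemize}
\item $\ad^*_{e_1}e_1$ has $\mathcal{P}$-component $w\mapsto\langle e_1,[e_1,w]\rangle=-\langle a_1,w\rangle$, so $\ad^*_{e_1}e_1=-a_1$;
\item $\ad^*_{e_1}e_2=-a_2$;
\item $\ad^*_{u}e_1=\langle a_1,u\rangle e_1+\langle a_2,u\rangle e_2+f_1(u)$, using $\langle e_1,[u,w]\rangle=\langle f_1(u),w\rangle$;
\item $\ad^*_{e_i}u=0$;
\item $\ad^*_vu=0$, since $[v,\cdot]$ lands in $\mathfrak{g}'\perp\mathcal{P}$.
\end{itemize}
The remaining cases are analogous.

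Next, we fill in the four blocks of the table.
\begin{itemize}
\item The $c$–$c$ entries are simply the complete lifts of Proposition~\ref{t2.22}(1).
\item The $v$–$v$ entries are $(\nabla_UV-\frac12[U,V])^c$. For $U=e_1$, $V=u$ this gives $-\langle a_1,u\rangle e_1-\frac12\langle b_1+a_2,u\rangle e_2-\frac12 f_1(u)+\frac12(\langle a_1,u\rangle e_1+\langle a_2,u\rangle e_2)$, i.e. $-\frac12(\langle a_1,u\rangle e_1+\langle b_1,u\rangle e_2+f_1(u))$, matching the table.
\item The $c$–$v$ entries are $(\nabla_UV+\frac12\ad^*_VU)^v$.
\item The $v$–$c$ entries are $(\nabla_UV+\frac12\ad^*_UV)^v$.
\end{itemize}
In each block we combine linearly and collect terms along $e_1,e_2$ and $\mathcal{P}$.

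The main obstacle is bookkeeping rather than ideas. The conventions need care: the order of arguments in $\ad^*$, the sign of $[u,e_i]$ versus $[e_i,u]$, and the skew-symmetry of $f_i$ when converting $\langle e_j,[u,w]\rangle$ into $\langle f_j(u),w\rangle$ versus $-\langle f_j(w),u\rangle$. I would fix $\ad^*$ once and then check one entry per block against the table, for example $\widetilde\nabla_{e_1^c}e_1^v=\frac12a_1^v$ and $\widetilde\nabla_{v^v}u^v=0$. Any systematic sign mismatch would show up there, and I would correct the convention before computing the rest.

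A useful consistency check is torsion-freeness: $\widetilde\nabla_{U^\ast}V^\ast-\widetilde\nabla_{V^\ast}U^\ast$ must equal the lifted bracket rules of Section~\ref{s2}.
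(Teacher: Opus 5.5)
Your route is the paper's: compute $\ad^*$ on the splitting $\mathfrak{g}'\oplus\mathcal{P}$, then substitute it, together with Proposition~\ref{t2.22}, into \eqref{Levi-Civita for tangent general}. Your convention $\langle \ad^*_XY,Z\rangle=\langle Y,[X,Z]\rangle$ is the one those formulas need.

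\textbf{The problem is one of the $\ad^*$ values you list.} The $e_2$-component of $\ad^*_ue_1$ is $\langle e_1,[u,e_2]\rangle=\langle b_1,u\rangle$, not $\langle a_2,u\rangle$. The correct values are
\[
\ad^*_ue_1=\langle a_1,u\rangle e_1+\langle b_1,u\rangle e_2+f_1(u),\qquad \ad^*_ue_2=\langle a_2,u\rangle e_1+\langle b_2,u\rangle e_2+f_2(u),
\]
so you have transposed the off-diagonal coefficients. With your value, $\widetilde\nabla_{e_1^c}u^v=(\nabla_{e_1}u+\frac12\ad^*_ue_1)^v$ comes out as $-\frac12(\langle a_1,u\rangle e_1^v+\langle b_1,u\rangle e_2^v)$. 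The table has $-\frac12(\langle a_1,u\rangle e_1^v+\langle a_2,u\rangle e_2^v)$. The entry $\widetilde\nabla_{u^v}e_1^c$ is wrong too.

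Your safeguards would not catch this:
\begin{enumerate}
\item The two sample entries you check do not involve $\ad^*_ue_1$.
\item Torsion-freeness cannot see the $\ad^*$ input. In the mixed blocks, $\widetilde\nabla_{U^c}V^v$ and $\widetilde\nabla_{V^v}U^c$ both contain the same term $\frac12\ad^*_VU$, which cancels in their difference. In the $c$--$c$ and $v$--$v$ blocks no $\ad^*$ appears at all.
\end{enumerate}
The check that does test these inputs is metric compatibility for left-invariant fields. For example, $\tilde g(\widetilde\nabla_{e_1^c}u^v,e_2^v)=-\tilde g(u^v,\widetilde\nabla_{e_1^c}e_2^v)=-\frac12\langle a_2,u\rangle$, which your value violates.

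\textbf{Two smaller remarks.}

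First, your reason for $[e_1,e_2]=0$ is not valid as stated, since a two-dimensional Lie algebra need not be abelian. The right argument is that $\mathfrak{g}$ is solvable: a nonzero Levi factor $\mathfrak{s}=[\mathfrak{s},\mathfrak{s}]\subseteq\mathfrak{g}'$ would have dimension at least $3$. Hence $\mathfrak{g}'$ is nilpotent, and a two-dimensional nilpotent Lie algebra is abelian. The paper uses this tacitly; it is built into Proposition~\ref{t2.22}, where $\nabla_{e_1}e_2=\nabla_{e_2}e_1$.

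Second, when you fill the $c$--$c$ block by your (correct) rule, $\widetilde\nabla_{v^c}u^c$ comes out as $(\nabla_vu)^c=\frac12(\langle f_1(v),u\rangle e_1^c+\langle f_2(v),u\rangle e_2^c)$. The value printed in the table at that position does not even depend on $u$, so it is a misprint. A mismatch there is not an error on your side.
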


\begin{proof}
It suffices to observe that:
\begin{align*}
& g\left( \ad ^*_{e_1}e_1, c_1e_1+c_2e_2\right) = g\big(e_1, \ad _{e_1}\left(c_1e_1+c_2e_2\right)\big) = g\big(e_1, c_1\left[e_1,e_1\right]+c_2\left[e_1,e_2\right]\big) = 0,\\
& g\left( \ad ^*_{e_1}e_1, u\right)=g\left(e_1, \ad _{e_1}u\right) = g\big(e_1,\left[e_1,u\right]\big) = g\big(-\left\langle a_1,u\right\rangle e_1-\left\langle a_2,u\right\rangle e_2,e_1\big) = \left\langle -a_1,u\right\rangle.
\end{align*}
Hence, $ \ad ^*_{e_1}e_1=a_1$.
Therefore, by equations \eqref{Levi-Civita for tangent general}, we have:
\[{\widetilde \nabla }_{e^c_1}e^v_1={\left( \nabla _{e_1}e_1+\frac{1}{2} \ad ^*_{e_1}e_1\right)}^v={\left(a_1+\frac{1}{2}\left(-a_1\right)\right)}^v=\frac{1}{2}a^v_1.\]
Similarly, we compute:
\[ \ad ^*_{e_1}e_2=-a_2 , \ad ^*_{e_2}e_1=-b_1 , \ad ^*_{e_2}e_2=-b_2.\]
Moreover,
\[\!\!\left. \begin{array}{@{}l@{}}
g(\ad_{e_1}u, c_1e_1+c_2e_2) = g\big(u, \ad _{e_1}(c_1e_1+c_2e_2)\big) = g\big(u,c_1[e_1,e_1 ]+c_2[e_1,e_2]\big)=0 \\
g\left( \ad ^*_{e_1}u , v\right)=g\left(u, \ad _{e_1}v\right)=g\left(u,\left[e_1,v\right]\right)=g\big(u,-\left\langle a_1,v\right\rangle e_1-\left\langle a_2,v\right\rangle e_2\big)=0
\end{array}\right\}\!
\Rightarrow \!\ad^*_{e_1}\!u=0.
\]
By similar arguments, $ \ad ^*_{e_2}u=0$.

For computing $ \ad ^*_ue_1\in \mathfrak{g}$, suppose $ \ad ^*_ue_1={\xi }_1e_1+{\xi }_2e_2+\sum^n_{i=3}{{\xi }_iu_i}$ where ${\left\{u_i\right\}}^n_{i=3}$ is an orthonormal basis of the ($n-2$)-dimensional subspace $\mathcal{P}$ and ${\xi }_i$s are scalers. Direct computation gives:
\begin{align*}
& {\xi }_1=g\left( \ad ^*_ue_1,e_1\right)=g\left(e_1, \ad _ue_1\right)=g\left(e_1,\left\langle a_1,u\right\rangle e_1+\left\langle a_2,u\right\rangle e_2\right)=\left\langle a_1,u\right\rangle,\\
& {\xi }_2=g\left( \ad ^*_ue_1,e_2\right)=g\left(e_1, \ad _ue_2\right)=g\left(e_1, \left\langle b_1,u\right\rangle e_1+\left\langle b_2,u\right\rangle e_2\right)=\left\langle b_1,u\right\rangle, \\
& {\xi }_i=g\left( \ad ^*_ue_1,u_i\right)=g\left(e_1, \ad _uu_i\right)=g\left(e_1, \left\langle f_1 (u) ,u_i\right\rangle e_1+\left\langle f_2 (u) ,u_i\right\rangle e_2\right)=\left\langle f_1 (u) ,u\right\rangle.
\end{align*}
So we have:
\begin{align*}
\ad ^*_ue_1=\left\langle a_1,u\right\rangle e_1+\left\langle b_1,u\right\rangle e_2+\sum^n_{i=3}{\left\langle f_1 (u) ,u_i\right\rangle u_i}=\left\langle a_1,u\right\rangle e_1+\left\langle b_1,u\right\rangle e_2+f_1 (u).
\end{align*}
Similarly, $ \ad ^*_ue_2=\left\langle a_2,u\right\rangle e_1+\left\langle b_2,u\right\rangle e_2+f_2(u)$.
Also, one can check that
\begin{align*}
& \begin{aligned}
 g\left( \ad ^*_uv, c_1e_1+c_2e_2\right)
 &=g\left(v, c_1 \ad _ue_1+c_2 \ad _ue_2\right)\\
 &=g\left(v,\left(c_1\left\langle a_1,u\right\rangle +c_2\left\langle b_1,u\right\rangle \right)e_1+\left(c_1\left\langle a_2,u\right\rangle +c_2\left\langle b_2,u\right\rangle \right)e_2\right)=0,
\end{aligned}\\
& \begin{aligned}
 g\left( \ad ^*_uv,w\right)
 &=g\left(v, \ad _uw\right)=g\left(v,\left\langle f_1 (u) ,v\right\rangle e_1+\left\langle f_2 (u) ,v\right\rangle e_2\right)=0.
\end{aligned}
\end{align*}
So, $\ad ^*_uv=0$.
By a similar way, $\ad ^*_vu=0$. Also, using the same method as above, we obtain:
\[ \ad ^*_{e_1}u= \ad ^*_{e_2}u=0,\]
where $u,v,w\in \mathcal{P}$.

Now, equations \eqref{Levi-Civita for tangent general} together with the Proposition~\ref{t2.22}, complete the proof.
\end{proof}
\begin{theorem}\label{t3.2}
Let $G$ be a Lie group with a two-dimensional derived subalgebra. Then the sectional curvatures are:
\begin{align*}
&\tilde{K}\left(e^c_1,e^c_2\right)=\tilde{K}\left(e^v_1,e^v_2\right)=\frac{1}{4}{\left\|a_2+b_1\right\|}^2-\left\langle a_1,b_2\right\rangle \vvv \tilde{K}\left(e^c_1,e^v_2\right)=\frac{1}{4}{\left\|a_2\right\|}^2-\left\langle a_1,b_2\right\rangle, \\
& \tilde{K}\left(e^c_1,e^v_1\right)=-\frac{3}{4}{\left\|a_1\right\|}^2 \vvv \tilde{K}\left(e^v_1,e^c_2\right)=\frac{1}{4}{\left\|b_1\right\|}^2-\left\langle a_1,b_2\right\rangle, \\
& \tilde{K}\left(e^c_2,e^v_2\right)=-\frac{3}{4}{\left\|b_2\right\|}^2 \vvv \tilde{K}\left(u^v,v^v\right)=\tilde{K}\left(u^c,u^v\right)=0,\\
& \tilde{K}\left(u^c,v^c\right)=\tilde{K}\left(u^c,v^v\right)=-\frac{3}{4}\left({\left\langle f_1 (u) ,v\right\rangle }^2+{\left\langle f_2 (u) ,v\right\rangle }^2\right), \\
& \tilde{K}\left(u^c,e^c_1\right)=\tilde{K}\left(u^c,e^v_1\right)=\frac{1}{4}{\left\|f_1 (u) \right\|}^2-{\left\langle a_1,u\right\rangle }^2+\frac{1}{4}\left\langle u,a_2+b_1\right\rangle \left\langle u,b_1-3a_2\right\rangle, \\
& \tilde{K}\left(u^c,e^c_2\right)=\tilde{K}\left(u^c,e^v_2\right)=\frac{1}{4}{\left\|f_2 (u) \right\|}^2-{\left\langle b_2,u\right\rangle }^2+\frac{1}{4}\left\langle u,a_2+b_1\right\rangle \left\langle u,a_2-3b_1\right\rangle, \\
& \tilde{K}\left(u^v,e^c_1\right)=-\frac{3}{4}\left({\left\langle a_1,u\right\rangle }^2+{\left\langle a_2,u\right\rangle }^2\right) \vvv \tilde{K}\left(u^v,e^v_1\right)=\frac{1}{4}\left({\left\|f_1 (u) \right\|}^2+{\left\langle a_1,u\right\rangle }^2+{\left\langle b_1,u\right\rangle }^2\right), \\
& \tilde{K}\left(u^v,e^c_2\right)=-\frac{3}{4}\left({\left\langle b_1,u\right\rangle }^2+{\left\langle b_2,u\right\rangle }^2\right) \vvv \tilde{K}\left(u^v,e^v_2\right)=\frac{1}{4}\left({\left\|f_2 (u) \right\|}^2+{\left\langle a_2,u\right\rangle }^2+{\left\langle b_2,u\right\rangle }^2\right),
\end{align*}
where $\{u,v\}$ is an arbitrary orthonormal subset of the ($n-2$)-dimensional subspace $\mathcal{P}$.
\end{theorem}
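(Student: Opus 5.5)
The plan is to compute each sectional curvature directly from the Levi-Civita connection in Lemma~\ref{t3.1}. For orthonormal left-invariant fields $X,Y$ on $TG$,
\[
\tilde K(X,Y)=\tilde g\big(\tilde R(X,Y)Y,X\big)=\tilde g\big(\widetilde\nabla_X\widetilde\nabla_YY-\widetilde\nabla_Y\widetilde\nabla_XY-\widetilde\nabla_{[X,Y]}Y,\,X\big).
\]
Every pair in the statement is orthonormal for $\tilde g$, because $\{e_1^c,e_2^c,u_i^c,e_1^v,e_2^v,u_i^v\}$ is an orthonormal basis of $\tilde{\mathfrak g}$. The brackets come from the lift rules $[X^c,Y^c]=[X,Y]^c$, $[X^c,Y^v]=[X,Y]^v$ and $[X^v,Y^v]=0$, combined with the bracket relations $[u,e_1]=\langle a_1,u\rangle e_1+\langle a_2,u\rangle e_2$, and so on.

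Since metric compatibility gives $\tilde g(\widetilde\nabla_Y\widetilde\nabla_XY,X)=-\tilde g(\widetilde\nabla_XY,\widetilde\nabla_YX)$, I will use the equivalent form
\[
\tilde K(X,Y)=\tilde g(\widetilde\nabla_X\widetilde\nabla_YY,X)+\tilde g(\widetilde\nabla_XY,\widetilde\nabla_YX)-\tilde g(\widetilde\nabla_{[X,Y]}Y,X).
\]

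The purely complete pairs $(e_1^c,e_2^c)$, $(u^c,v^c)$ and $(u^c,e_i^c)$ need no new work. The relation $\widetilde\nabla_{U^c}V^c=(\nabla_UV)^c$ and $[U^c,V^c]=[U,V]^c$ show that $\mathfrak g^c$ is a totally geodesic subalgebra isometric to $\mathfrak g$. So these curvatures equal the corresponding $K$ of Proposition~\ref{t2.22}. For $(u^c,e_1^c)$, one checks that $\frac14\langle b_1,u\rangle^2-\frac34\langle a_2,u\rangle^2-\frac12\langle a_2,u\rangle\langle b_1,u\rangle=\frac14\langle u,a_2+b_1\rangle\langle u,b_1-3a_2\rangle$, which gives the stated rewriting.

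The remaining pairs, those involving at least one vertical vector, are done one at a time from the table. For example, for $(e_1^c,e_1^v)$ we have $[e_1^c,e_1^v]=0$. The table gives $\widetilde\nabla_{e_1^v}e_1^v=a_1^c$, $\widetilde\nabla_{e_1^c}a_1^c=(\nabla_{e_1}a_1)^c$, and $\widetilde\nabla_{e_1^c}e_1^v=\widetilde\nabla_{e_1^v}e_1^c=\frac12a_1^v$. Then $\langle\nabla_{e_1}a_1,e_1\rangle=-\|a_1\|^2$ by Proposition~\ref{t2.22}, and $\frac14\|a_1\|^2-\|a_1\|^2=-\frac34\|a_1\|^2$. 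The other pairs follow the same pattern. When evaluating $\widetilde\nabla_X$ on a field such as $a_1^c$, $b_2^v$ or $(f_1(u))^c$, I will expand it by linearity in $\mathcal P$. That is, I use the rows $v^c$ and $v^v$ of the table with $v$ replaced by $a_1$, $b_2$, $f_i(u)$, and so on, together with the identities $\langle f_i(u),u\rangle=0$ (skew-symmetry) and $\langle f_i(u),v\rangle=-\langle f_i(v),u\rangle$.

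The main obstacle is purely bookkeeping. The mixed pairs $(u^v,e_j^{c})$, $(u^v,e_j^v)$, $(u^c,e_j^v)$ and $(u^c,v^v)$ produce many cross terms in $\langle a_i,u\rangle$, $\langle b_i,u\rangle$ and $f_i(u)$. Deciding whether they collapse to the stated forms needs care with signs and with which entries of the table are nonzero. For instance, for $(u^v,v^v)$ the bracket vanishes and $\widetilde\nabla_{v^v}v^v=0$, so only $\|\widetilde\nabla_{u^v}v^v\|$-type terms survive; these must cancel against the symmetric term to give $0$.

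For the pairs $(u^c,e_j^v)$ and $(u^c,v^v)$ I will first try to show they agree with their complete counterparts. One route is to compare the two curvature expressions term by term. The alternative is to use the general Kowalski-type formula $\tilde R(U^c,V^v)V^v$ in terms of $\nabla$ and $\ad^*$. If a claimed identity fails to match, I would recheck the relevant table entries against \eqref{Levi-Civita for tangent general} using the adjoint values computed in the proof of Lemma~\ref{t3.1}.
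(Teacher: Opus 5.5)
Your proposal is correct and follows essentially the paper's proof. The paper also computes each $\tilde K$ directly from the connection of Lemma~\ref{t3.1}, but it first writes out the full vectors $\tilde R(X,Y)Y$ (Lemma~\ref{t4.4}) before pairing with $X$; you compute only the scalar $\tilde g(\tilde R(X,Y)Y,X)$ and handle all purely complete pairs at once via the isometric, totally geodesic copy $\mathfrak g^c\cong\mathfrak g$.

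One caution for the bookkeeping. The table entry in row $v^c$, column $u^c$ is misprinted and should read $(\nabla_vu)^c=\frac12\big(\langle f_1(v),u\rangle e_1+\langle f_2(v),u\rangle e_2\big)^c$, so in particular $\widetilde\nabla_{u^c}u^c=0$. This is exactly the kind of recheck against \eqref{Levi-Civita for tangent general} that you anticipated.
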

\begin{proof}
The proof is a direct computation using the Riemann curvature tensor given in Lemma \ref{t4.4} of the Addendum. For instance, we see that:
\begin{align*}
 \tilde{K}\left(e^c_1,e^v_1\right)
 &= \frac{\tilde{g} \left(\tilde{R}\left(e^c_1,e^v_1\right)e^v_1 , e^c_1\right)}{\tilde{g}\left(e^c_1,e^c_1\right)\tilde{g}\left(e^v_1,e^v_1\right)-{\tilde{g}}^2\left(e^c_1,e^v_1\right)}
\\
&=\tilde{g}\left(-\frac{1}{4}\left(3{\left\|a_1\right\|}^2e^c_1+\left\langle a_1,b_1+2a_2\right\rangle e^c_2+(f_1\left(a_1\right))^c\right), e^c_1\right)=-\frac{3}{4}{\left\|a_1\right\|}^2.
\end{align*}
The proofs of the remaining formulas are similar.
\end{proof}

The next corollary follows from the combination of the preceding theorem and the second assertion of Proposition ~\ref{t2.22}.

\begin{coro}\label{t3.3}
(1)
The tangent bundle of any Lie group with a two-dimensional derived subalgebra, at each point, has positive, negative, and zero sectional curvatures.\\
(2)
 The relationship between the sectional curvatures on the tangent Lie group and the base Lie group is given by:
\begin{enumerate}
\item[I)]
$ \tilde{K}\left(u^c, e^c_i\right)=\tilde{K}\left(u^c,e^v_i\right)=K\left(u,e_i\right) , i=1,2,$
\item[II)] $\tilde{K}\left(e^c_1,e^c_2\right)=\tilde{K}\left(e^v_1,e^v_2\right)=K(e_1,e_2),$
\item[III)] $\tilde{K}\left(u^c,v^c\right)=\tilde{K}\left(u^c,v^v\right)=K (u,v), $
\end{enumerate}
where $\{u,v\}$ is an orthonormal subset in $\mathcal{P}$.
\end{coro}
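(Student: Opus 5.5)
Part (2) is a term-by-term comparison of Theorem~\ref{t3.2} with the second assertion of Proposition~\ref{t2.22}. Part (1) then follows by exhibiting, at the identity (hence everywhere, by left-invariance of $\tilde g$), one plane of each curvature sign.

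For (2)~II), Theorem~\ref{t3.2} gives $\tilde K(e_1^c,e_2^c)=\tilde K(e_1^v,e_2^v)=\frac14\|a_2+b_1\|^2-\langle a_1,b_2\rangle$, which is literally $K(e_1,e_2)$. For III), $\tilde K(u^c,v^c)=\tilde K(u^c,v^v)=-\frac34(\langle f_1(u),v\rangle^2+\langle f_2(u),v\rangle^2)$. Since $f_1,f_2$ are skew-symmetric, $\langle u,f_i(v)\rangle^2=\langle f_i(u),v\rangle^2$, so this equals $K(u,v)$.

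For I), I expand the expression in Theorem~\ref{t3.2}:
\[
\tfrac14\langle u,a_2+b_1\rangle\langle u,b_1-3a_2\rangle=\tfrac14\bigl(\langle b_1,u\rangle^2-3\langle a_2,u\rangle^2\bigr)-\tfrac12\langle a_2,u\rangle\langle b_1,u\rangle .
\]
Adding $\frac14\|f_1(u)\|^2-\langle a_1,u\rangle^2$ reproduces $K(e_1,u)$ from Proposition~\ref{t2.22}. The case $i=2$ is the same computation with $a_2\leftrightarrow b_1$, $f_1\to f_2$ and $a_1\to b_2$. This symmetric bookkeeping is the only place a sign error could creep in, so I will check both cases explicitly.

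For (1), I use the planes for which Theorem~\ref{t3.2} gives definite signs regardless of the structure constants. Zero is automatic: $\tilde K(u^c,u^v)=0$, and also $\tilde K(u^v,v^v)=0$, provided $\dim\mathcal P\ge1$. If $n=2$, then $\mathfrak g=\mathfrak g'$ would be a two-dimensional Lie algebra equal to its own derived algebra, which is impossible, so $\dim\mathcal P\ge1$.

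For the other two signs:
\begin{itemize}
\item Nonpositive: $\tilde K(u^v,e_1^c)=-\frac34(\langle a_1,u\rangle^2+\langle a_2,u\rangle^2)$.
\item Nonnegative: $\tilde K(u^v,e_1^v)=\frac14(\|f_1(u)\|^2+\langle a_1,u\rangle^2+\langle b_1,u\rangle^2)$.
\end{itemize}

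The main obstacle is \emph{strictness}. I must choose $u$ with $(\langle a_1,u\rangle,\langle a_2,u\rangle)\neq0$, or the analogue with $b_1,b_2$ using $\tilde K(u^v,e_2^c)$. The argument is as follows.
\begin{itemize}
\item Suppose $a_1=a_2=b_1=b_2=0$. Then $\mathcal P$ acts trivially on $\mathfrak g'$, so $\mathfrak g'=[\mathcal P,\mathcal P]$ is spanned by the values of $f_1,f_2$, and $[\mathfrak g,\mathfrak g']=0$. In this nilpotent case the negative sign comes instead from $\tilde K(u^c,v^c)$, choosing $u,v$ with $\langle f_1(u),v\rangle\ne0$, which must exist since $\mathfrak g'\neq0$.
\item Otherwise some $a_i$ or $b_i$ is nonzero, and I take $u$ to be that vector normalized, which makes $\tilde K(u^v,e_1^c)<0$ or $\tilde K(u^v,e_2^c)<0$.
\end{itemize}
Positivity is handled the same way: if $a_1,b_1,f_1$ all vanish, switch to $\tilde K(u^v,e_2^v)$. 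If $a_1,a_2,b_1,b_2$ and $f_1,f_2$ all vanish, then $\mathfrak g'=0$, a contradiction. So some $u$ gives a strictly positive value.
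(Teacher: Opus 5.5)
Your proposal is correct and follows the paper's route: the paper justifies the corollary only by saying it follows from comparing Theorem~\ref{t3.2} with the second part of Proposition~\ref{t2.22}, which is exactly your term-by-term check for (2). For (1), the paper gives no argument at all, and your case analysis on $a_i,b_i,f_i$ supplies the missing strictness argument; the only detail to add is that in the nilpotent case you can make $u,v$ orthonormal with $\langle f_1(u),v\rangle\neq 0$ by projecting $v$ off $u$, since $\langle f_1(u),u\rangle=0$ by skew-symmetry.
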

\begin{theorem}\label{t3.4}
Let $G$ be an $n$-dimensional Lie group with a two-dimensional commutator subgroup, and let $\{u_i\}^{n-2}_{i=1}$ be an orthonormal basis of the $(n-2)$-dimensional subspace $\mathcal{P}$, so that $\{e_1,e_2,u_1,u_2,\ldots ,u_{n-2}\}$ is an orthonormal basis of $\mathfrak{g}$.
Then the Ricci curvatures on the tangent Lie group are:
\begin{enumerate}
\item[1)]
$\tilde{r}\left(e^c_1\right)=\widetilde{\Ric }\left(e^c_1,e^c_1\right)=r\left(e_1\right)-\frac{1}{2}\left(3{\left\|a_1\right\|}^2+{\left\|a_2\right\|}^2\right)-\left\langle a_1,b_2\right\rangle, $
\item[2)]
$\tilde{r}\left(e^c_2\right)=\widetilde{\Ric }\left(e^c_2,e^c_2\right)=r\left(e_2\right)-\frac{1}{2}\left(3{\left\|b_2\right\|}^2+{\left\|b_1\right\|}^2\right)-\left\langle a_1,b_2\right\rangle, $
\item[3)]
$\tilde{r}\left(e^v_1\right)=\widetilde{\Ric }\left(e^v_1,e^v_1\right)=2r\left(e_1\right)+\frac{1}{2}\left({\left\|a_1\right\|}^2+{\left\|a_2\right\|}^2\right),$
\item[4)]
$\tilde{r}\left(e^v_2\right)=\widetilde{\Ric }\left(e^v_2,e^v_2\right)=2r\left(e_2\right)+\frac{1}{2}\left({\left\|b_1\right\|}^2+{\left\|b_2\right\|}^2\right),$
\item[5)]
$5)\tilde{r}\left(u^c\right)=\widetilde{\Ric }\left(u^c,u^c\right)=2r (u),$
\item[6)]
$\tilde{r}\left(u^v\right)=\widetilde{\Ric }\left(u^v,u^v\right)=r (u) +\frac{1}{2}\left({\left\langle a_1,u\right\rangle }^2+{\left\langle b_2,u\right\rangle }^2+2\left\langle a_2,u\right\rangle \left\langle b_1,u\right\rangle \right),$
\item[7)]
$\widetilde{\Ric }\left(e^c_1,e^c_2\right)=\Ric \left(e_1,e_2\right)-\frac{1}{2}\left(\left\langle a_1,a_2+2b_1\right\rangle +\left\langle b_2,b_1+2a_2\right\rangle \right),$
\item[8)]
$\widetilde{\Ric }\left(e^v_1,e^v_2\right)=2Ric\left(e_1,e_2\right)+\frac{1}{2}\left(\left\langle a_1,b_1\right\rangle +\left\langle a_2,b_2\right\rangle \right),$
\item[9)]
$\widetilde{\Ric }\left(u^c,v^c\right)=2\Ric (u,v),$
\item[10)]
$\widetilde{\Ric }\left(u^v,v^v\right)=\Ric (u,v) +\frac{1}{2}\Big(\langle a_1,u\rangle \langle a_1,v\rangle +\left\langle b_2,u\right\rangle \left\langle b_2,v\right\rangle +\left\langle b_1,u\right\rangle \left\langle a_2,v\right\rangle +\left\langle a_2,u\right\rangle \left\langle b_1,v\right\rangle \Big),$
\item[11)]
$\widetilde{\Ric }\left(e^c_1,u^c\right)=2Ric(e_1,u),$
 \item[12)]
$\widetilde{\Ric }\left(e^c_2,u^c\right)=2Ric(e_2,u),$
\item[13)]
$\widetilde{\Ric }\left(e^v_1,u^v\right)=\Ric \left(e_1,u\right)-\frac{1}{2}\left\langle u,f_1\left(a_1+b_2\right)\right\rangle,$
\item[14)]
$\widetilde{\Ric }\left(e^v_2,u^v\right)=\Ric \left(e_2,u\right)-\frac{1}{2}\left\langle u,f_2\left(a_1+b_2\right)\right\rangle,$
\item[15)]
$\begin{aligned}[t]
 \widetilde{\Ric }\left(e^c_1,e^v_2\right)
&=\widetilde{\Ric }\left(e^c_1,e^v_1\right)
=\widetilde{\Ric }\left(e^c_2,e^v_2\right)
=\widetilde{\Ric }\left(e^v_1,e^c_2\right)
=\widetilde{\Ric }\left(u^c,v^v\right)
=\widetilde{\Ric }\left(u^c,u^v\right)\\
& =\widetilde{\Ric }\left(e^v_1,u^c\right)
 =\widetilde{\Ric }\left(e^v_2,u^c\right)
 =\widetilde{\Ric }\left(e^c_1,u^v\right)
 =\widetilde{\Ric }\left(e^c_2,u^v\right)=0.
\end{aligned}$
\end{enumerate}
\end{theorem}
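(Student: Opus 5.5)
The plan is to compute each Ricci entry directly as a trace over the orthonormal frame $\{e_1^c,e_2^c,u_1^c,\dots,u_{n-2}^c,e_1^v,e_2^v,u_1^v,\dots,u_{n-2}^v\}$ of $\tilde{\frak g}$. Since $\tilde g$ makes complete and vertical lifts of an orthonormal basis of $\mathfrak g$ orthonormal, we have
\[
\widetilde{\Ric}(X,Y)=\sum_{Z}\tilde g\big(\tilde R(Z,X)Y,Z\big),
\]
with $Z$ running over this frame. For the diagonal entries 1)--6) I will instead sum the sectional curvatures already listed in Theorem~\ref{t3.2}: for instance $\tilde r(e_1^c)=\tilde K(e_1^c,e_1^v)+\tilde K(e_1^c,e_2^c)+\tilde K(e_1^c,e_2^v)+\sum_i\big(\tilde K(e_1^c,u_i^c)+\tilde K(e_1^c,u_i^v)\big)$. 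I will then compare the result with $r(e_1)=K(e_1,e_2)+\sum_iK(e_1,u_i)$ from Proposition~\ref{t2.22}. By Corollary~\ref{t3.3}, the $u_i^c$ terms reproduce $\sum_i K(e_1,u_i)$ and $\tilde K(e_1^c,e_2^c)=K(e_1,e_2)$. The leftover terms are $-\frac34\|a_1\|^2+\frac14\|a_2\|^2-\langle a_1,b_2\rangle-\frac34\sum_i(\langle a_1,u_i\rangle^2+\langle a_2,u_i\rangle^2)$. Parseval in $\mathcal P$ collapses these to $-\frac32\|a_1\|^2-\frac12\|a_2\|^2-\langle a_1,b_2\rangle$, which is 1).

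The same bookkeeping handles 2)--6). Here I will repeatedly use $\sum_i\langle a,u_i\rangle^2=\|a\|^2$ (since $a_j,b_j\in\mathcal P$) and $\sum_i\|f_j(u_i)\|^2=\sum_{i,k}\langle f_j(u_i),u_k\rangle^2$, together with the skew-symmetry of $f_j$. The factor $2$ in 5), and the "$2r$" parts of 3) and 4), come from the lifted curvatures in the $c$ and $v$ directions reproducing the base curvatures twice. I must check that the extra terms $\frac14(\|f_1(u)\|^2+\dots)$ combine with the remaining ones to give exactly the base Ricci value.

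The off-diagonal entries 7)--15) cannot be read off from sectional curvatures. For these I will use the explicit curvature tensor from Lemma~\ref{t4.4} of the Addendum, or equivalently compute $\tilde R(Z,X)Y=\tilde\nabla_Z\tilde\nabla_XY-\tilde\nabla_X\tilde\nabla_ZY-\tilde\nabla_{[Z,X]}Y$ from Table~\ref{j.1} and the bracket rules $[X^c,Y^c]=[X,Y]^c$, $[X^c,Y^v]=[X,Y]^v$, $[X^v,Y^v]=0$. For item 15) I first note a parity argument. From Table~\ref{j.1}, $\tilde\nabla$ maps (c,c) and (v,v) pairs to complete lifts and mixed pairs to vertical lifts. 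Hence $\tilde R$ with an odd number of vertical arguments is vertical-valued when the total count is odd, so $\tilde g(\tilde R(Z,X)Y,Z)=0$ whenever exactly one of $X,Y$ is vertical. This kills all entries in 15) at once.

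For 7)--14) I will expand the trace and again compare term by term with $\Ric$ on $G$ computed from Proposition~\ref{t2.22}. The main obstacle will be this bookkeeping. In 13) and 14) terms such as $\sum_i\langle f_1(u_i),u\rangle\langle a_1,u_i\rangle$ must be recognized as $-\langle u,f_1(a_1)\rangle$ via skew-symmetry and then combined into $f_1(a_1+b_2)$. In 7) and 8) the cross terms $\langle a_1,b_1\rangle,\langle a_2,b_2\rangle$ must be sorted correctly. I will organize the calculation by first tabulating $\tilde g(\tilde R(Z,X)Y,Z)$ for each type of $Z$ ($e_j^{c/v}$, $u_i^{c/v}$), and then summing with Parseval.
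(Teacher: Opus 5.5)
Your plan is correct and is essentially the paper's proof. The paper also computes every entry as the frame trace $\sum_Z\tilde g(\tilde R(Z,X)Y,Z)$ from the curvature tensor of Lemma~\ref{t4.4}; summing the sectional curvatures of Theorem~\ref{t3.2} for 1)--6) is the same computation, and that bookkeeping does close up. Your $\mathbb{Z}_2$-grading argument for 15) is a cleaner replacement for the paper's appeal to ``similar'' computations. One small sign slip in your illustration: by linearity $\sum_i\langle f_1(u_i),u\rangle\langle a_1,u_i\rangle=+\langle u,f_1(a_1)\rangle$, whereas the minus sign belongs to $\sum_i\langle f_1(u),u_i\rangle\langle a_1,u_i\rangle=\langle f_1(u),a_1\rangle=-\langle u,f_1(a_1)\rangle$, which is the form that appears in the paper's computation of 11).
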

\begin{proof}
Easily, we can see that the Ricci curvatures on the tangent bundle $TG$ are defined as follows:
\[\widetilde{\Ric}\left(X^k_i,X^l_j\right) = \sum^n_{m=1} \tilde{g}\left(\tilde{R}\left(X^c_m,X^k_i\right)X^l_j , X^c_m\right) +\sum^n_{m=1} \tilde{g}\left(\tilde{R}\left(X^v_m,X^k_i\right)X^l_j , X^v_m\right), \]
where $k,l\in \{c,v\}$, $i,j\in \{1,2,\ldots ,n\}$, and $\{X_1, X_2, \ldots , X_n\}$ is an orthonormal basis consisting of left-invariant vector fields for the Lie algebra $\mathfrak{g}$ of the Lie group $G$. For instance, we compute case 11. The proof of the rest is similar.
\begin{align*}
\widetilde{\Ric}(e^c_1,u^c)
\!& =\!\tilde{g}\left(\tilde{R}(e^c_1,e^c_1)u^c,e^c_1\right)+\tilde{g}\left(\tilde{R}(e^c_2,e^c_1)u^c,e^c_2\right) + \tilde{g}\left(\tilde{R}(e^v_1,e^c_1)u^c,e^v_1\right)
\\
& +\tilde{g}\left(\tilde{R}\left(e^v_2,e^c_1\right)u^c,e^v_2\right)+\sum^{n-2}_{i=1}{\tilde{g}(\tilde{R}\left(u^c_i,e^c_1\right)u^c,u^c_i)}
+\sum^{n-2}_{i=1}{\tilde{g}\left(\tilde{R}\left(u^v_i,e^c_1\right)u^c,u^v_i\right)}
\\
& =\tilde{g}\left(\tilde{R}\left(u^c,e^c_2\right)e^c_2,e^c_1\right)
+\tilde{g}\left(\tilde{R}\left(u^c,e^v_1\right)e^v_1,e^c_1\right)
+\tilde{g}\left(\tilde{R}\left(u^c,e^v_2\right)e^v_2,e^c_1\right)
\\
& +\sum^{n-2}_{i=1}{\tilde{g}(\tilde{R}\left(u^c,u^c_i\right)u^c_i,e^c_1})+\sum^{n-2}_{i=1}{\tilde{g}\left(\tilde{R}\left(u^c,u^v_i\right)u^v_i,e^c_1\right)}
\\
& =\frac{1}{4}\left\langle u,f_2\left(b_1+a_2\right)-2f_1\left(b_2\right)\right\rangle -\frac{1}{4}\left\langle u,f_1\left(a_1\right)\right\rangle +\frac{1}{4}\left\langle u,f_2\left(a_2\right)-2f_1\left(b_2\right)\right\rangle
\\
& +\frac{1}{4}\sum^{n-2}_{i=1}{\left\langle f_2 (u) ,u_i\right\rangle \left\langle b_1+3a_2,u_i\right\rangle }
 +\sum^{n-2}_{i=1}{\left\langle f_1 (u) ,u_i\right\rangle \left\langle a_1,u_i\right\rangle }
+\frac{3}{4}\sum^{n-2}_{i=1}{\left\langle f_1 (u) ,u_i\right\rangle \left\langle a_1,u_i\right\rangle }
\\
& +\frac{3}{4}\sum^{n-2}_{i=1}{\left\langle f_2 (u) ,u_i\right\rangle \left\langle a_2,u_i\right\rangle }
\\
& =\frac{1}{4}\left\langle u , f_2(b_1)+f_2(a_2)-2f_1(b_2)-f_1(a_1)+f_2(a_2)-2f_1(b_2)\right\rangle +\frac{1}{4}\left\langle f_2 (u) ,b_1+3a_2\right\rangle
\\
& +\left\langle f_1 (u) ,a_1\right\rangle +\frac{3}{4}\left\langle f_1 (u) ,a_1\right\rangle +\frac{3}{4}\left\langle f_2 (u) ,a_2\right\rangle
\\
&=\frac{1}{4}\left\langle u , f_2(b_1)-f_1(a_1)+2f_2(a_2)-4f_1(b_2)\right\rangle \!-\!\frac{7}{4}\left\langle u , f_1(a_1)\right\rangle
\!-\!\frac{1}{4}\left\langle u , f_2(b_1)+6f_2(a_2)\right\rangle
\\
&-\left\langle u , f_2(a_2)+f_1(b_2+2a_1)\right\rangle =2\left\langle u,-f_1(a_1)\right\rangle -\left(\left\langle u,f_2(a_2)\right\rangle +\left\langle u,f_1(b_2)\right\rangle \right)
\\
&=2\Ric (e_1,u). \qedhere
\end{align*}
\end{proof}

\begin{theorem}\label{t3.5}
Let $G$ be a Lie group with a two-dimensional commutator subgroup equipped with a left-invariant Randers metric $F$ defined by a left-invariant Riemannian metric $g$ and a left-invariant vector field $X\neq 0$ such that $\|X\|_g<1$, where $\|\cdot\|_g=\sqrt{g(\cdot ,\cdot)}$.
Then:
\begin{enumerate}
\item[1.]
$F$ is of Douglas type if and only if $X\in \mathcal{P}$.
\item[2.]
$F$ is of Berwald type if and only if $X\in \ker(f_1)\cap \ker (f_2)\cap \spann \{a_1,b_2,a_2+b_1\}^\bot$.
\end{enumerate}
\end{theorem}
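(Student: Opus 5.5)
For part 1, I would apply Proposition~\ref{t2.28} directly. The Randers metric $F$ is of Douglas type if and only if $g(X,[Y,Z])=0$ for all $Y,Z\in\mathfrak{g}$. Since the brackets $[Y,Z]$ span the derived algebra $\mathfrak{g}'=\spann\{e_1,e_2\}$, this condition says exactly that $X\perp_g \mathfrak{g}'$. By construction $\mathcal{P}$ is the $g$-orthogonal complement of $\mathfrak{g}'$ in $\mathfrak{g}$, so $F$ is of Douglas type if and only if $X\in\mathcal{P}$.

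For part 2, I would use Lemma~\ref{t2.26}: $F$ is of Berwald type if and only if $X$ is parallel, i.e. $\nabla_Y X=0$ for every left-invariant $Y$. The first step is to reduce to $X\in\mathcal{P}$. If $X$ is parallel, then $\beta$ is parallel. Since $\nabla$ is torsion-free, $d\beta(Y,Z)=(\nabla_Y\beta)(Z)-(\nabla_Z\beta)(Y)=0$, so $\beta$ is closed. By Definition~\ref{t2.27} and part 1, this forces $X\in\mathcal{P}$. This avoids expanding $\nabla_Y(x_1e_1+x_2e_2+w)$ for a general $X$, which I regard as the only potentially messy point.

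Now write $X=w\in\mathcal{P}$ and read off $\nabla_Y w$ from Proposition~\ref{t2.22}(1) for $Y=e_1,e_2$ and $v\in\mathcal{P}$.
\begin{itemize}
\item For $Y=e_1$: $\nabla_{e_1}w=-\bigl(\langle a_1,w\rangle e_1+\tfrac12\langle a_2+b_1,w\rangle e_2+\tfrac12 f_1(w)\bigr)$. Its components lie in $\mathrm{span}\,e_1$, $\mathrm{span}\,e_2$ and $\mathcal{P}$, which are mutually orthogonal. So it vanishes if and only if $\langle a_1,w\rangle=0$, $\langle a_2+b_1,w\rangle=0$ and $f_1(w)=0$.
\item For $Y=e_2$: by the same reasoning, $\nabla_{e_2}w=0$ if and only if $\langle b_2,w\rangle=0$, $\langle a_2+b_1,w\rangle=0$ and $f_2(w)=0$.
\item For $Y=v\in\mathcal{P}$: $\nabla_v w=\tfrac12\bigl(\langle f_1(v),w\rangle e_1+\langle f_2(v),w\rangle e_2\bigr)$. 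Skew-symmetry of $f_i$ gives $\langle f_i(v),w\rangle=-\langle v,f_i(w)\rangle$. Hence this vanishes for all $v$ if and only if $f_1(w)=f_2(w)=0$, which is already implied by the previous two conditions.
\end{itemize}

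Collecting these conditions, $X$ is parallel if and only if $X\in\ker f_1\cap\ker f_2\cap\spann\{a_1,b_2,a_2+b_1\}^\perp$. For the converse direction, any such $X$ lies in $\mathcal{P}$ automatically, because $f_i$, $a_i$ and $b_i$ are defined on or lie in $\mathcal{P}$. The three computations above then show that all covariant derivatives of $X$ vanish, so $F$ is of Berwald type by Lemma~\ref{t2.26}.
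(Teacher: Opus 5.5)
Your proof is correct. Part 1 matches the paper, but for part 2 you take a different route.

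The paper does not use the connection formulas of Proposition~\ref{t2.22} at this point. It quotes from \cite{8} the algebraic criterion that $F$ is Berwald iff $g([X,y],z)+g([X,z],y)=0$ and $g(X,[y,z])=0$ for all $y,z\in\mathfrak{g}$. It gets $X\in\mathcal{P}$ by citing ``Berwald implies Douglas'' from \cite{7}. It then tests skew-symmetry of $\ad_X$ on the blocks of $\mathfrak{g}'\oplus\mathcal{P}$: the $\mathfrak{g}'\times\mathfrak{g}'$ block gives $\langle a_1,X\rangle=\langle b_2,X\rangle=\langle a_2+b_1,X\rangle=0$, and the mixed block gives $f_1(X)=f_2(X)=0$.

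You instead apply Lemma~\ref{t2.26} directly. You prove Berwald $\Rightarrow$ Douglas yourself, since a parallel $1$-form is closed when $\nabla$ is torsion-free. You then read the conditions off $\nabla_{e_1}X$, $\nabla_{e_2}X$ and $\nabla_vX$.

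The two routes are linked by Koszul's formula. Once $g(X,[y,z])=0$, one has $2g(\nabla_yX,z)=-g([X,y],z)-g([X,z],y)$, so the same blocks produce the same conditions.

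Your version has three advantages:
\begin{enumerate}
\item It is self-contained within the paper.
\item It shows which condition comes from which covariant derivative, and that $\nabla_vX=0$ is redundant.
\item It avoids the weak spot in the paper's case (a). There the $\mathcal{P}\times\mathcal{P}$ block is justified by appealing to the parallelism of $X$, which is not available in the converse direction. In fact that block holds trivially, because $[X,y]\in\mathfrak{g}'\perp\mathcal{P}$.
\end{enumerate}

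The paper's version, in turn, is purely algebraic, using only brackets and the inner product. So it does not depend on the connection formulas of Proposition~\ref{t2.22}, which you are implicitly trusting; they do check out against Koszul's formula.
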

\begin{proof}
(1)
According to Proposition~\ref{t2.28}, it is clear.
\\
(2)
 In \cite{8}, it is proved by Lemma~\ref{t2.26} and Koszul's formula that $F$ is of Berwald type if and only if the following conditions hold:
\begin{enumerate}
\item[i)]
$g\big([X,y] ,z\big)+g\big([X,z],y\big)=0,$
\item[ii)]
$g\big(X, [y,z] \big)=0, \qquad \forall y,z\in \mathfrak{g}.$
\end{enumerate}
Now, to check condition (i), we consider the following various cases:
\begin{enumerate}
\item[(a)]
If $y,z\in \mathcal{P}$, then by Koszul's formula and the fact that $X$ is parallel with respect to the Riemannian connection induced by $g$, condition (i) holds.
\item[(b)]
If $y=k_1e_1+k_2e_2$ and $z=l_1e_1+l_2e_2$, then:
\[ g([X,y] ,z) +g([X,z],y)=0 \Longleftrightarrow 2k_1l_1\langle a_1,X\rangle +(k_2l_1+k_1l_2)(\langle a_2,X\rangle + \langle b_1,X \rangle )+2k_2l_2 \langle b_2,X\rangle =0.\]
\item[(c)]
If $y\in \mathcal{P}$ and $z=k_1e_1+k_2e_2$, we have:
\[g\left( [X,y] ,z\right) + g\left([X,z],y\right) = 0 \Longleftrightarrow k_1 \langle f_1 (X) ,y \rangle +k_2 \langle f_2 (X) ,y \rangle = 0, \qquad \forall y\in \mathfrak{g}. \]
\end{enumerate}
On the other hand, to check condition (2), since every Berwald-type Randers metric $F$ is also Douglas (see \cite{7}), the first part of the proposition yields $X\in \mathcal{P}$.
Therefore, considering the cases above, the statement is satisfied.
\end{proof}

\begin{remark}\label{t3.6}
(1)
In the special case $n=3$, it is easily seen that $G$ does not admit any Berwald-type Randers metric.
\\
(2)
In the case $n=4$, a necessary condition for F to be Berwald is that
\[X\in \spann \{a_1,b_2,a_2+b_1\}^\bot.\]
\end{remark}
Suppose $F$ is a left-invariant Randers metric on $G$ as defined in the above theorem on the Lie group $G$.
Its complete and vertical lifts on the tangent Lie group $TG$ are defined as follows (see \cite{4}):
\[
F^c\left( (x,y) ,\tilde{Z}\right)=\sqrt{\tilde{g}(\tilde{Z},\tilde{Z})} + \tilde{g}\left(X^c (x,y) ,\tilde{Z}\right), \ \
F^v\left( (x,y) ,\tilde{Z}\right) = \sqrt{\tilde{g}(\tilde{Z},\tilde{Z})}+\tilde{g}\left(X^v (x,y) ,\tilde{Z}\right),
\]
where $x\in G$, $y\in T_xG$, and $\tilde{Z}\in T_{(x,y)}TG$.

\begin{coro}\label{t3.7}
Let $F^c$ and $F^v$ be the complete and vertical lifts of the left-invariant Randers metric $F$ defined above. Then
\begin{enumerate}
\item[1.]
$F^c$ is a Berwald-type Randers metric if and only if
\[X\in \ker(f_1)\cap \ker(f_2)\cap \spann\{a_1,b_2,a_2+b_1\}^\bot\]
\item[2.]
$F^v$ is a Berwald-type Randers metric if and only if $X\in Z(\mathfrak{g})\cap \spann \{a_1,b_2,a_2+b_1\}^\bot$,
\end{enumerate}
where $Z(\mathfrak{g})$ denotes the center of $\mathfrak{g}$.
\end{coro}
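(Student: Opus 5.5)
The plan is to reduce both statements to a parallelism condition on $TG$ via Lemma~\ref{t2.26}, and then read that condition off from the general lifted connection formulas \eqref{Levi-Civita for tangent general}. First, $F^c$ and $F^v$ are Randers metrics on $TG$ built from the left-invariant Riemannian metric $\tilde g$ and the left-invariant vector fields $X^c$ and $X^v$. Since $\tilde g(X^c,X^c)=\tilde g(X^v,X^v)=g(X,X)<1$, Lemma~\ref{t2.26} applies on $TG$. Hence $F^c$ (resp.\ $F^v$) is of Berwald type if and only if $X^c$ (resp.\ $X^v$) is $\tilde\nabla$-parallel. By left-invariance and tensoriality it suffices to test this against the basis $\{U^c,U^v: U\in\mathfrak g\}$.

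\textbf{Part 1.} By \eqref{Levi-Civita for tangent general},
\[
\tilde\nabla_{U^c}X^c=(\nabla_UX)^c, \qquad \tilde\nabla_{U^v}X^c=\bigl(\nabla_UX+\tfrac12\ad^*_UX\bigr)^v .
\]
So $X^c$ is parallel if and only if $\nabla X=0$ and $\ad^*_UX=0$ for all $U\in\mathfrak g$. The first condition is exactly the Berwald condition for $F$ (Lemma~\ref{t2.26} on $G$). The second condition says $g(X,[U,W])=0$ for all $U,W$, i.e.\ $X\perp\mathfrak g'$, i.e.\ $X\in\mathcal P$. That is the Douglas condition of Theorem~\ref{t3.5}(1), which is already implied by Berwald. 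Therefore $F^c$ is Berwald exactly when $F$ is, and Theorem~\ref{t3.5}(2) gives the stated set.

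\textbf{Part 2.} Again by \eqref{Levi-Civita for tangent general},
\[
\tilde\nabla_{U^c}X^v=\bigl(\nabla_UX+\tfrac12\ad^*_XU\bigr)^v, \qquad \tilde\nabla_{U^v}X^v=\bigl(\nabla_UX-\tfrac12[U,X]\bigr)^c .
\]
Vanishing for all $U$ means $\nabla X=0$ together with $[U,X]=0$ and $\ad^*_XU=0$ for all $U$. Since $g(\ad^*_XU,W)=g(U,[X,W])$, the last two conditions are each equivalent to $X\in Z(\mathfrak g)$. So $F^v$ is Berwald if and only if $F$ is Berwald and $X\in Z(\mathfrak g)$. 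It remains to identify this with $Z(\mathfrak g)\cap\spann\{a_1,b_2,a_2+b_1\}^\bot$.
\begin{itemize}
\item If $X\in\mathcal P$ is central, then $[X,e_i]=0$ forces $\langle a_i,X\rangle=\langle b_i,X\rangle=0$, and $[X,v]=0$ for $v\in\mathcal P$ forces $f_1(X)=f_2(X)=0$. Hence $\ker f_1\cap\ker f_2$ is automatic, and the condition collapses as claimed.
\item Conversely, for a central $X$, Koszul's formula gives $\nabla_UX=-\tfrac12\ad^*_UX$. So parallelism is exactly $X\perp\mathfrak g'$, and Theorem~\ref{t3.5}(2) then applies.
\end{itemize}

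\textbf{Expected obstacle.} The delicate point is the bookkeeping in this last identification: $Z(\mathfrak g)$ might meet $\mathfrak g'$, so I need the condition $X\in\mathcal P$, which comes from the Douglas part of Theorem~\ref{t3.5}. I would make explicit that the Berwald hypothesis already places $X$ in $\mathcal P$, so the intersection is understood inside $\mathcal P$. The remaining step is a routine check that centrality of $X\in\mathcal P$ is equivalent to $X\in\ker f_1\cap\ker f_2$ together with orthogonality to $a_1,a_2,b_1,b_2$.
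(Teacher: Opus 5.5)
Your route differs from the paper's. The paper's proof only combines Theorem~\ref{t3.5} with Lemmas~3, 4 and Corollary~5 of \cite{4}. In effect these say that $F^c$ is Berwald iff $F$ is, and $F^v$ is Berwald iff $F$ is Berwald and $X$ is central. You instead rederive these facts directly from \eqref{Levi-Civita for tangent general}, which makes the argument self-contained.

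Part~1 is correct as written. There $\tilde\nabla_{U^c}X^c=(\nabla_UX)^c$ isolates $\nabla X$, so the second formula only adds $\ad^*_UX=0$, which Berwald already gives.

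Reading the orthogonal complement inside $\mathcal P$ in part~2 is also right, and in fact necessary. In Example~\ref{t3.11}, $e_2$ is central and orthogonal to $a_2=Y_1$ (the other three vectors vanish). Yet $\nabla_{Y_1}e_2=-\frac12e_1\neq0$, so $X=\epsilon e_2$ does not make $F^v$ Berwald.

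The gap is in Part~2, at the sentence ``vanishing for all $U$ means $\nabla X=0$ together with $[U,X]=0$ and $\ad^*_XU=0$.'' Only the converse direction is immediate. Parallelism of $X^v$ says $\nabla_UX=\frac12[U,X]$ and $\nabla_UX=-\frac12\ad^*_XU$ for all $U$. Unlike in Part~1, neither formula contains $\nabla_UX$ alone, so you cannot conclude that each summand vanishes. This forward implication ($F^v$ Berwald forces $X$ central and parallel) is the nontrivial content of the result from \cite{4} that the paper cites, so it needs an argument.

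One argument that works:
\begin{itemize}
\item By Koszul, $2\nabla_UX=[U,X]-\ad^*_UX-\ad^*_XU$.
\item Comparing with $2\nabla_UX=-\ad^*_XU$ gives $\ad^*_UX=[U,X]$.
\item Comparing with $2\nabla_UX=[U,X]$ then gives $\ad^*_XU=-\ad^*_UX=\ad_XU$, so $\ad_X$ is self-adjoint.
\item Hence $g(X,[U,W])=g([U,X],W)=-g(\ad_XU,W)$ for all $U,W$. The left side is skew in $(U,W)$ and the right side is symmetric, so both vanish.
\item This gives $\ad_X=0$, $X\perp\mathfrak g'$, and $\nabla_UX=\frac12[U,X]=0$.
\end{itemize}
With this step inserted, your two bullets correctly identify the Berwald condition for $F^v$ with $X\in Z(\mathfrak g)\cap\mathcal P$, which is the stated set read inside $\mathcal P$. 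The genuinely delicate step was this splitting, not the final bookkeeping you flagged.
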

\begin{proof}
The statements follow from Theorem~\ref{t3.5} and Lemmas 3 and 4, as well as Corollary 5 in \cite{4}.
\end{proof}
\begin{theorem}\label{t3.8}
Let $G$ be a Lie group with Lie algebra $\mathfrak{g}={\mathfrak{g}}'\bigoplus \mathcal{P}$ and a two-dimensional derived subalgebra.
Suppose $G$ is equipped with a left-invariant Randers metric $F$ of Berwald type defined by a left-invariant Riemannian metric $g$ and a left-invariant vector field $X\neq 0$ satisfying $\|X\|_g<1$.
Then the flag curvatures of the Randers metric $F^c$ on the tangent Lie group $TG$ are as follows:
\begin{fleqn}
\[1) \begin{cases}
\tilde{P}= \spann \left\{e^c_1,e^c_2\right\} \\
\tilde{P}= \spann \left\{e^v_1,e^v_2\right\}
\end{cases}
\to
\begin{aligned}[t]
K^{F^c}\left(\tilde{P},e^c_1\right)
& = K^{F^c}\left(\tilde{P},e^c_2\right) = K^{F^c}\left(\tilde{P},e^v_1\right)=K^{F^c}\left(\tilde{P},e^v_2\right) \\
& =\frac{1}{4}{\left\|a_2+b_1\right\|}^2-\left\langle a_1,b_2\right\rangle,
\end{aligned}
\]
\end{fleqn}
\begin{fleqn}
\[2) \tilde{P}= \spann \left\{e^c_1,e^v_1\right\}  \to K^{F^c}\left(\tilde{P},e^c_1\right)=K^{F^c}\left(\tilde{P},e^v_1\right)=-\frac{3}{4}{\left\|a_1\right\|}^2_g,\]
\end{fleqn}
\begin{fleqn}
\[3) \tilde{P}= \spann \left\{e^c_1,e^v_2\right\}  \to K^{F^c}\left(\tilde{P},e^c_1\right)=K^{F^c}\left(\tilde{P},e^v_2\right)=\frac{1}{4}{\left\|a_2\right\|}^2_g-\left\langle a_1,b_2\right\rangle, \]
\end{fleqn}
\begin{fleqn}
\[4) \tilde{P}= \spann \left\{e^c_2,e^v_1\right\}  \to K^{F^c}\left(\tilde{P},e^c_2\right)=K^{F^c}\left(\tilde{P},e^v_1\right)=\frac{1}{4}{\left\|b_1\right\|}^2_g-\left\langle a_1,b_2\right\rangle, \]
\end{fleqn}
\begin{fleqn}
\[5) \tilde{P}= \spann \left\{e^c_2,e^v_2\right\}  \to K^{F^c}\left(\tilde{P},e^c_2\right)=K^{F^c}\left(\tilde{P},e^v_2\right)=-\frac{3}{4}{\left\|b_2\right\|}^2_g,\]
\end{fleqn}
\begin{fleqn}
\[6)\begin{cases}
\tilde{P}= \spann \left\{e^c_1,U^c\right\} \\
\tilde{P}= \spann \left\{e^v_1,U^c\right\} \end{cases}\!\!
\to
\begin{aligned}[t]
K^{F^c}\left(\tilde{P},e^c_1\right)
&=K^{F^c}\left(\tilde{P},e^v_1\right)
 =\frac{1}{4}\left({\langle b_1,U\rangle }^2-3{\langle a_2,U\rangle }^2+{\|f_1 (U)\|}^2\right)
\\
&-{\left\langle a_1,U\right\rangle }^2-\frac{1}{2}\left\langle a_2,U\right\rangle \left\langle b_1,U\right\rangle,
\end{aligned}
 \]
\end{fleqn}
\begin{fleqn}
\[7) \tilde{P}= \spann \left\{e^c_1,U^v\right\} \to
\begin{aligned}[t]
K^{F^c}\left(\tilde{P},e^c_1\right)=K^{F^c}\left(\tilde{P},U^v\right)
& =-\frac{1}{4}\left(7{\langle a_1,U\rangle }^2+3{\langle a_2,U\rangle }^2\right)\\
&-\langle a_2,U\rangle \langle b_1,U\rangle,
\end{aligned}
\]
\end{fleqn}
\begin{fleqn}
\[8) \tilde{P}= \spann \left\{e^c_2,U^v\right\} \to
\begin{aligned}[t]
 K^{F^c}\left(\tilde{P},e^c_2\right)
& =K^{F^c}\left(\tilde{P},U^v\right)\\
&=-\frac{1}{4}\left(7{\left\langle b_2,U\right\rangle }^2+3{\left\langle b_1,U\right\rangle }^2\right)-\left\langle a_2,U\right\rangle \left\langle b_1,U\right\rangle,
\end{aligned}
\]
\end{fleqn}
\begin{fleqn}
\[9) \tilde{P}= \spann \left\{e^v_1,U^v\right\} \to K^{F^c}\left(\tilde{P},e^v_1\right)=K^{F^c}\left(\tilde{P},U^v\right)=\frac{1}{4}\left({\langle a_1,U\rangle }^2+{\langle b_1,U\rangle }^2+{\|f_1 (U) \|}^2\right),\]
\end{fleqn}
\begin{fleqn}
\[10) \tilde{P}= \spann \left\{e^v_2,U^v\right\}  \to\! K^{F^c}\left(\tilde{P},e^v_2\right) = K^{F^c}\left(\tilde{P},U^v\right) = \frac{1}{4}\left({\langle a_2,U\rangle }^2+{\langle b_2,U\rangle }^2+{\|f_2 (U) \|}^2\right),\]
\end{fleqn}
\begin{fleqn}
\[11)\begin{cases}
\tilde{P}= \spann \left\{e^c_2,U^c\right\} \\
\tilde{P}= \spann \left\{e^v_2,U^c\right\} \end{cases}\!
\!\!\!\!\to\!
\begin{aligned}[t]
 K^{F^c}\left(\tilde{P},e^v_2\right)
 = K^{F^c}\left(\tilde{P},e^c_2\right)
& = \frac{1}{4}\left({\langle a_2,U\rangle }^2-3{\langle b_1,U\rangle }^2+{\|f_2 (U) \|}^2\right) \\
& -\langle b_2,U\rangle^2-\frac{1}{2}\langle a_2,U\rangle \langle b_1,U\rangle,
\end{aligned}
 \]
\end{fleqn}
\begin{fleqn}
\[12)\begin{cases}
\tilde{P}= \spann \left\{U^c,V^c\right\} \\
\tilde{P}= \spann \left\{U^c,V^v\right\} \end{cases}
 \to K^{F^c}\left(\tilde{P},U^c\right)=-\frac{3}{4{\left(1+g\left(X,U\right)\right)}^2}\left({\left\langle f_1 (U) ,V\right\rangle }^2+{\left\langle f_2 (U) ,V\right\rangle }^2\right),\]
\end{fleqn}
\begin{fleqn}
\begin{align*}
& 13)\begin{cases}
\tilde{P}= \spann \left\{U^c,e^c_1\right\} \\
\tilde{P}= \spann \left\{U^c,e^v_1\right\} \end{cases}
\to \\
& \begin{aligned}[t]
 K^{F^c}\!(\tilde{P},U^c)\!=\!\frac{1}{{(1+g(X,U))}^2}
 \left\{\frac{1}{4}\!\left({\langle b_1,U\rangle }^2\!-\!3{\langle a_2,U\rangle }^2\!+{\|f_1 (U) \|}^2\right)\! -\! {\langle a_1,U\rangle }^2\!-\!\frac{1}{2}\langle a_2,U\rangle \left\langle b_1,U\right\rangle \right\},
 \end{aligned}
\end{align*}
\end{fleqn}
\begin{fleqn}
\begin{align*}
& 14)\begin{cases}
\tilde{P}= \spann \left\{U^c,e^c_2\right\} \\
\tilde{P}= \spann \left\{U^c,e^v_2\right\} \end{cases}
\to
\\
&
\begin{aligned}[t]
& K^{F^c}(\tilde{P},U^c)\!=\!\frac{1}{{\left(1+g(X,U)\right)}^2}
 \left\{\frac{1}{4}\!\left({\langle a_2,U\rangle }^2\!-3{\langle b_1,U\rangle }^2\!+{\|f_2 (U)\|}^2\right)\! -\! {\langle b_2,U\rangle }^2\!-\frac{1}{2}\langle a_2,U\rangle \langle b_1,U\rangle \right\},
\end{aligned}
\end{align*}
\end{fleqn}
\begin{fleqn}
\[15)\begin{cases}
\tilde{P}= \spann \left\{U^v,V^v\right\} \\
\tilde{P}= \spann \left\{U^c,U^v\right\}  \end{cases}
 \to K^{F^c}\left(\tilde{P},U^c\right)=K^{F^c}\left(\tilde{P},U^v\right)=0,\]
\end{fleqn}
\begin{fleqn}
\[16) \;
 \tilde{P}= \spann \left\{U^v,V^c\right\} \; \to \; K^{F^c}\left(\tilde{P},U^v\right)=-\frac{3}{4}\left({\left\langle f_1 (U) ,V\right\rangle }^2+{\left\langle f_2 (U) ,V\right\rangle }^2\right),\]
\end{fleqn}
where $\{U,V\}$ is an orthonormal set in $\mathcal{P}$, with respect to the Riemannian metric $g$.
\end{theorem}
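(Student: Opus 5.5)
The plan is to reduce the computation of the flag curvatures of $F^c$ to the sectional curvatures of the lifted Riemannian metric $\tilde g$, which are already listed in Theorem~\ref{t3.2}. First, since $F$ is of Berwald type, Corollary~\ref{t3.7}(1) tells us that $X\in\ker(f_1)\cap\ker(f_2)\cap\spann\{a_1,b_2,a_2+b_1\}^\bot$ and that $F^c$ is again a Berwald-type Randers metric. By Lemma~\ref{t2.26}, the left-invariant field $X^c$ is then parallel with respect to $\tilde g$. For a Berwald Randers metric $\tilde F=\tilde\alpha+\tilde\beta$ with $\tilde\beta$ parallel, the Chern connection coincides with the Levi-Civita connection of $\tilde\alpha$, and hence so does the curvature tensor $\tilde R$. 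We therefore use the standard formula (as in \cite{4,7}): for a flag $\tilde P=\spann\{\tilde Y,\tilde Z\}$ with flagpole $\tilde Y$ and $\tilde Z$ $\tilde g$-orthonormal to $\tilde Y$,
\[
K^{F^c}(\tilde P,\tilde Y)=\frac{\tilde\alpha(\tilde Y)^2}{F^c(\tilde Y)^2}\,\tilde K(\tilde P)=\frac{\tilde K(\tilde Y,\tilde Z)}{\bigl(1+\tilde g(X^c,\tilde Y)\bigr)^2},
\]
valid when $\tilde g(\tilde Y,\tilde Y)=1$. It follows from the fundamental tensor $g_{\tilde Y}$ of a Randers metric whose $\beta$ is parallel, since $g_{\tilde Y}(\tilde R(\tilde Z,\tilde Y)\tilde Y,\tilde Z)$ and $g_{\tilde Y}(\tilde Z,\tilde Z)g_{\tilde Y}(\tilde Y,\tilde Y)-g_{\tilde Y}(\tilde Y,\tilde Z)^2$ both reduce to Riemannian quantities multiplied by powers of $F/\alpha$.

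Next, we evaluate the factor $\tilde g(X^c,\tilde Y)$ for every flagpole. By the definition of $\tilde g$, $\tilde g(X^c,e_i^v)=\tilde g(X^c,U^v)=0$. Moreover, $\tilde g(X^c,e_i^c)=g(X,e_i)=0$ because $X\in\mathcal P$ (Theorem~\ref{t3.5}(1)), whereas $\tilde g(X^c,U^c)=g(X,U)$. Consequently, the denominator equals $1$ in every case except those with flagpole $U^c$, where it equals $(1+g(X,U))^2$. This accounts for cases 12--14 and explains why every other flag curvature coincides with a Riemannian sectional curvature.

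Finally, we read off each $\tilde K$ from Theorem~\ref{t3.2} and Corollary~\ref{t3.3} (for example, $\tilde K(U^c,e_1^c)=K(U,e_1)$ from Proposition~\ref{t2.22}), and then simplify using the Berwald constraints, namely $\langle a_1,X\rangle=\langle b_2,X\rangle=\langle a_2+b_1,X\rangle=0$ and $f_i(X)=0$.

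I expect the main obstacle to be the flags mixing vertical lifts of $\mathcal P$ with complete lifts of $e_i$, that is, cases 7 and 8. There the stated values do not match $\tilde K(u^v,e^c_1)$ from Theorem~\ref{t3.2} verbatim, so the curvature must be recomputed directly from the Riemann tensor of Lemma~\ref{t4.4} in the Addendum, taking care to use $g_{\tilde Y}$-orthogonality rather than $\tilde g$-orthogonality. I would first verify these cases by a direct evaluation of $\tilde g(\tilde R(\tilde Z,\tilde Y)\tilde Y,\tilde Z)$ with the connection of Table~\ref{j.1}. The remaining cases are routine substitutions.
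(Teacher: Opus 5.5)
\textbf{Where the plan fails: cases 7 and 8.} For the other cases your route is sound and rests on the same principle as the paper's proof. The paper works out only case 3, by invoking Theorem 6 of \cite{4}. That theorem is the identity $K^{F^c}(\tilde P,\tilde Y)=\tilde K(\tilde P)/(1+\tilde g(X^c,\tilde Y))^2$ for a $\tilde g$-unit flagpole, written out in base data ($K$, $\nabla$, $\ad^*$, brackets). You instead read $\tilde K$ off Theorem~\ref{t3.2}. Together with $g(X,e_i)=0$, this gives cases 1--6 and 9--16 at once.

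For cases 7 and 8, recomputing ``with $g_{\tilde Y}$-orthogonality'' cannot change anything. Your own reduction shows that the flag curvature depends only on $\tilde P$ and the flagpole. For $\tilde P=\spann\{e_1^c,U^v\}$ both possible flagpoles have factor $1$, since $\tilde g(X^c,e_1^c)=g(X,e_1)=0$ and $\tilde g(X^c,U^v)=0$. A direct evaluation with Table~\ref{j.1} reproduces Lemma~\ref{t4.4}(21). Hence $K^{F^c}(\tilde P,e_1^c)=K^{F^c}(\tilde P,U^v)=-\frac34(\langle a_1,U\rangle^2+\langle a_2,U\rangle^2)$. This differs from the stated value by $-\langle a_1,U\rangle^2-\langle a_2,U\rangle\langle b_1,U\rangle$. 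In the same way, case 8 comes out as $-\frac34(\langle b_1,U\rangle^2+\langle b_2,U\rangle^2)$ rather than the stated expression.

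\textbf{The Berwald conditions do not remove the difference.} Take $\mathfrak g=\spann\{e_1,e_2,u,w\}$, orthonormal, with only nonzero brackets $[u,e_1]=e_1$ and $[u,e_2]=e_2$.
\begin{itemize}
\item[$\bullet$] Then $a_1=b_2=u$, $a_2=b_1=0$ and $f_1=f_2=0$.
\item[$\bullet$] $X=\epsilon w$ with $0<\epsilon<1$ is parallel, so it satisfies Theorem~\ref{t3.5}(2), and $F^c$ is Berwald.
\item[$\bullet$] From the table, $\tilde\nabla_{e_1^c}e_1^c=u^c$, $\tilde\nabla_{u^v}u^c=0$, $\tilde\nabla_{u^v}e_1^c=\frac12 e_1^v$, $\tilde\nabla_{e_1^c}e_1^v=\tilde\nabla_{e_1^v}e_1^c=\frac12 u^v$, and $[u^v,e_1^c]=e_1^v$.
\item[$\bullet$] Therefore $\tilde R(u^v,e_1^c)e_1^c=0-\frac14u^v-\frac12u^v=-\frac34u^v$.
\end{itemize}
So the flag curvature of $\spann\{e_1^c,u^v\}$ with flagpole $e_1^c$ is $-\frac34$, while item 7 predicts $-\frac74$. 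Case 8 with $U=u$ fails in the same way.

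Items 7 and 8 as stated are therefore false, and no argument can establish them. The paper's proof gives no details for these cases. What your method actually proves is the theorem with those two values replaced by the ones above. You should state that explicitly, rather than expecting a recomputation to reproduce the printed expressions.
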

\begin{proof}
For instance, we prove (3); the proofs of the other cases are similar. Let $\tilde{P}\!=\!\spann\{e^c_1,e^v_2\}$.
By Theorem~6 in \cite{4}, the first part of Corollary~\ref{t3.7} and the proof of Lemma~\ref{t3.1}, and of course some straightforward computation, we have:
\begin{align*}
K^{F^c}\left(\tilde{P},e^c_1\right)
& =\frac{1}{{\left(1+g(X,e_1\right)}^2}\left\{K\left(e_2,e_1\right)+\frac{1}{2}g\left(\left[e_2, \nabla _{e_1}e_2\right],e_1\right)-\frac{1}{2}g\left( \nabla _{e_2} \ad ^*_{e_2}e_1 , e_1\right)\right.
\\
& \hspace*{31mm} \left. +\frac{1}{4}g\left(\left[e_2, \ad ^*_{e_2}e_1\right], e_1\right)-\frac{1}{2}g\left(\left[\left[e_1,e_2\right],e_2\right], e_1\right)\right\}
\\
& =K\left(e_1,e_2\right)-\frac{1}{2}\left\langle b_1, a_2+b_1\right\rangle +\frac{1}{4}\left\langle b_1,b_1\right\rangle =\frac{1}{4}{\left\|a_2\right\|}^2_g-\left\langle a_1,b_2\right\rangle .
\qedhere
\end{align*}
\end{proof}

\begin{theorem}\label{t3.9}
Let $G$ be a Lie group with Lie algebra $\mathfrak{g}={\mathfrak{g}}'\bigoplus \mathcal{P}$ and a two-dimensional derived subalgebra. Suppose $G$ is equipped with a left-invariant Randers metric $F$ defined by a left-invariant Riemannian metric $g$ and a left-invariant vector field $X\neq 0$ with ${\left\|X\right\|}_g<1$, such that the left-invariant Randers metric $F^v$ is of Berwald type. Then the flag curvatures of $F^v$ on the tangent Lie group $TG$ are as follows:
\begin{fleqn}
\[1) \tilde{P}= \spann \left\{e^c_1,e^c_2\right\} \to
\begin{aligned}[t]
K^{F^v}\left(\tilde{P},e^c_1\right) & =K^{F^v}\left(\tilde{P},e^c_2\right)=K^{F^v}\left(\tilde{P},e^v_1\right)=K^{F^v}\left(\tilde{P},e^v_2\right)\\
& =\frac{1}{4}{\left\|a_2+b_1\right\|}^2-\left\langle a_1,b_2\right\rangle,
\end{aligned}
\]
\end{fleqn}
\begin{fleqn}
\[2)\begin{cases}
\tilde{P}= \spann \left\{e^c_1,U^c\right\} \\
\tilde{P}= \spann \left\{U^c,e^v_1\right\} \end{cases}
\!\!\!\to
\begin{aligned}[t] K^{F^v}\left(\tilde{P},e^c_1\right)
 =K^{F^v}\left(\tilde{P},e^v_1\right)
&=\frac{1}{4}\left({\left\langle b_1,U\right\rangle }^2-3{\left\langle a_2,U\right\rangle }^2+{\left\|f_1 (U) \right\|}^2\right)
\\
&-{\left\langle a_1,U\right\rangle }^2-\frac{1}{2}\left\langle a_2,U\right\rangle \left\langle b_1,U\right\rangle,
\end{aligned}
\]
\end{fleqn}
\begin{fleqn}
\[3)\begin{cases}
\tilde{P}= \spann \left\{e^c_2,U^c\right\} \\
\tilde{P}= \spann \left\{U^c,e^v_2\right\} \end{cases}\!
\!\!\!\! \to \begin{aligned}[t] K^{F^v}\left(\tilde{P},e^c_2\right)=K^{F^v}\left(\tilde{P},U^c\right)
& =\frac{1}{4}\left({\left\langle a_2,U\right\rangle }^2-3{\left\langle b_1,U\right\rangle }^2+{\left\|f_2 (U) \right\|}^2\right)\\
& -{\left\langle b_2,U\right\rangle }^2-\frac{1}{2}\left\langle a_2,U\right\rangle \left\langle b_1,U\right\rangle,
\end{aligned}
 \]
\end{fleqn}
\begin{fleqn}
\[4) \begin{cases}
\tilde{P}= \spann \left\{U^c,V^c\right\} \\
\tilde{P}= \spann \left\{U^c,V^v\right\} \end{cases}
 \to K^{F^v}\left(\tilde{P},U^c\right)=-\frac{3}{4}\left({\left\langle f_1 (U) ,V\right\rangle }^2+{\left\langle f_2 (U) ,V\right\rangle }^2\right),\]
\end{fleqn}
\begin{fleqn}
\[5) \begin{cases}
\tilde{P}= \spann \left\{U^v,V^v\right\} \\
\tilde{P}= \spann \left\{U^v,U^c\right\} \end{cases}
 \to K^{F^v}\left(\tilde{P},U^c\right)=K^{F^v}\left(\tilde{P},U^v\right)=0,\]
\end{fleqn}
\begin{fleqn}
\[6) \; \tilde{P}= \spann \left\{e^c_1,e^v_1\right\} \to K^{F^v}\left(\tilde{P},e^c_1\right)=-\frac{3}{4}{\left\|a_1\right\|}^2_g,\]
\end{fleqn}
\begin{fleqn}
\[7) \; \tilde{P}= \spann \left\{e^c_1,e^v_2\right\} \to K^{F^v}\left(\tilde{P},e^c_1\right)=\frac{1}{4}{\left\|a_2\right\|}^2_g-\left\langle a_1,b_2\right\rangle, \]
\end{fleqn}
\begin{fleqn}
\[8) \; \tilde{P}= \spann \left\{e^c_2,e^v_1\right\} \to K^{F^v}\left(\tilde{P},e^c_2\right)=\frac{1}{4}{\left\|b_1\right\|}^2_g-\left\langle a_1,b_2\right\rangle, \]
\end{fleqn}
\begin{fleqn}
\[9) \; \tilde{P}= \spann \left\{e^c_2,e^v_2\right\} \to K^{F^v}\left(\tilde{P},e^c_2\right)=-\frac{3}{4}{\left\|b_2\right\|}^2_g,\]
\end{fleqn}
\begin{fleqn}
\[10) \; \tilde{P}= \spann \left\{e^v_1,e^c_1\right\} \to K^{F^v}\left(\tilde{P},e^v_1\right)=-\frac{3}{4{\left(1+g\left(X,e_1\right)\right)}^2}{\left\|a_1\right\|}^2_g,\]
\end{fleqn}
\begin{fleqn}
\[11) \; \tilde{P}= \spann \left\{e^v_1,e^c_2\right\} \to K^{F^v}\left(\tilde{P},e^v_1\right)=\frac{1}{{\left(1+g\left(X,e_1\right)\right)}^2}\left(\frac{1}{4}{\left\|b_1\right\|}^2_g-\left\langle a_1,b_2\right\rangle \right),\]
\end{fleqn}
\begin{fleqn}
\[12) \; \tilde{P}= \spann \left\{e^v_1,e^v_2\right\} \to K^{F^v}\left(\tilde{P},e^v_1\right)=\frac{1}{{\left(1+g\left(X,e_1\right)\right)}^2}\left(\frac{1}{4}{\left\|a_2+b_1\right\|}^2-\left\langle a_1,b_2\right\rangle \right),\]
\end{fleqn}
\begin{fleqn}
\[13) \; \tilde{P}= \spann \left\{e^v_2,e^c_1\right\} \to K^{F^v}\left(\tilde{P},e^v_2\right)=\frac{1}{{\left(1+g\left(X,e_2\right)\right)}^2}\left(\frac{1}{4}{\left\|a_2\right\|}^2_g-\left\langle a_1,b_2\right\rangle \right),\]
\end{fleqn}
\begin{fleqn}
\[14) \; \tilde{P}= \spann \left\{e^v_2,e^v_1\right\} \to K^{F^v}\left(\tilde{P},e^v_2\right)=\frac{1}{{\left(1+g\left(X,e_2\right)\right)}^2}\left(\frac{1}{4}{\left\|a_2+b_1\right\|}^2-\left\langle a_1,b_2\right\rangle \right),\]
\end{fleqn}
\begin{fleqn}
\[15) \; \tilde{P}= \spann \left\{e^v_2,e^c_2\right\} \to K^{F^v}\left(\tilde{P},e^v_2\right)=-\frac{3}{4{\left(1+g\left(X,e_2\right)\right)}^2}{\left\|b_2\right\|}^2_g,\]
\end{fleqn}
\begin{fleqn}
\[16) \; \tilde{P}= \spann \left\{e^c_1,U^v\right\} \to K^{F^v}\left(\tilde{P},e^c_1\right)=-\frac{1}{4}\left(7{\left\langle a_1,U\right\rangle }^2+3{\left\langle a_2,U\right\rangle }^2\right)-\left\langle a_2,U\right\rangle \left\langle b_1,U\right\rangle, \]
\end{fleqn}
\begin{fleqn}
\[17) \; \tilde{P}= \spann \left\{e^c_2,U^v\right\} \to K^{F^v}\left(\tilde{P},e^c_2\right)=-\frac{1}{4}\left(7{\left\langle b_2,U\right\rangle }^2+3{\left\langle b_1,U\right\rangle }^2\right)-\left\langle a_2,U\right\rangle \left\langle b_1,U\right\rangle, \]
\end{fleqn}
\begin{fleqn}
\[18) \; \tilde{P}= \spann \left\{e^v_1,U^c\right\} \to \!
\begin{aligned}[t]
 K^{F^c}\left(\tilde{P},e^v_1\right) = \frac{1}{{\left(1+g\left(X,e_1\right)\right)}^2}
 & \left\{\frac{1}{4}\left({\left\langle b_1,U\right\rangle }^2\!-\!3{\left\langle a_2,U\right\rangle }^2+{\left\|f_1 (U) \right\|}^2\right) \right.
\\
&\quad \left. -{\left\langle a_1,U\right\rangle }^2-\frac{1}{2}\left\langle a_2,U\right\rangle \left\langle b_1,U\right\rangle \right\},
 \end{aligned}\]
\end{fleqn}
\begin{fleqn}
\[19) \; \tilde{P}= \spann \left\{e^v_1,U^v\right\} \to K^{F^v}\left(\tilde{P},e^v_1\right)=\frac{1}{4{\left(1+g\left(X,e_1\right)\right)}^2}\left({\left\langle a_1,U\right\rangle }^2+{\left\langle b_1,U\right\rangle }^2+{\left\|f_1 (U) \right\|}^2\right),\]
\end{fleqn}
\begin{fleqn}
\[20) \; \tilde{P}= \spann \left\{e^v_2,U^c\right\} \!\to \!
\begin{aligned}[t]
 K^{F^v}\left(\tilde{P},e^v_2\right)=\frac{1}{{\left(1+g\left(X,e_2\right)\right)}^2}
 & \left\{\frac{1}{4}\left({\left\langle a_2,U\right\rangle }^2-3{\left\langle b_1,U\right\rangle }^2+{\left\|f_2 (U) \right\|}^2\right)\right. \\
 &\quad \left.-{\left\langle b_2,U\right\rangle }^2-\frac{1}{2}\left\langle a_2,U\right\rangle \left\langle b_1,U\right\rangle \right\},
\end{aligned}
 \]
\end{fleqn}
\begin{fleqn}
\[21) \; \tilde{P}= \spann \left\{e^v_2,U^v\right\} \to K^{F^v}\left(\tilde{P},e^v_2\right)=\frac{1}{4{\left(1+g\left(X,e_2\right)\right)}^2}\left({\left\langle a_2,U\right\rangle }^2+{\left\langle b_2,U\right\rangle }^2+{\left\|f_2 (U) \right\|}^2\right),\]
\end{fleqn}
\begin{fleqn}
\[22) \; \tilde{P}= \spann \left\{U^v,V^c\right\} \to K^{F^v}\left(\tilde{P},U^v\right)=-\frac{3}{4{\left(1+g\left(X,U\right)\right)}^2}\left({\left\langle f_1 (U) ,V\right\rangle }^2+{\left\langle f_2 (U) ,V\right\rangle }^2\right),\]
\end{fleqn}
\begin{fleqn}
\[23) \; \tilde{P}= \spann \left\{U^v,e^v_1\right\} \to K^{F^v}\left(\tilde{P},U^v\right)=\frac{1}{4{\left(1+g\left(X,U\right)\right)}^2}\left({\left\langle a_1,U\right\rangle }^2+{\left\langle b_1,U\right\rangle }^2+{\left\|f_1 (U) \right\|}^2\right),\]
\end{fleqn}
\begin{fleqn}
\[24) \; \tilde{P}= \spann \left\{U^v,e^v_2\right\} \to K^{F^v}\left(\tilde{P},U^v\right)=\frac{1}{4{\left(1+g\left(X,U\right)\right)}^2}\left({\left\langle a_2,U\right\rangle }^2+{\left\langle b_2,U\right\rangle }^2+{\left\|f_2 (U) \right\|}^2\right),\]
\end{fleqn}
\begin{fleqn}
\[25) \; \tilde{P}= \spann \left\{U^v,e^c_1\right\} \to
\begin{aligned}[t]
K^{F^v}\left(\tilde{P},U^v\right)=-\frac{1}{{\left(1+g\left(X,U\right)\right)}^2}
& \left\{\frac{1}{4}\left(7{\left\langle a_1,U\right\rangle }^2+3{\left\langle a_2,U\right\rangle }^2\right) \right.
\\
& \quad +\left\langle a_2,U\right\rangle \left\langle b_1,U\right\rangle \bigg\},
 \end{aligned}
 \]
\end{fleqn}
\begin{fleqn}
\[26) \; \tilde{P}= \spann \left\{U^v,e^c_2\right\} \to
\begin{aligned}[t]
 K^{F^v}\left(\tilde{P},U^v\right)=-\frac{1}{{\left(1+g\left(X,U\right)\right)}^2}
& \left\{\frac{1}{4}\left(7{\left\langle b_2,U\right\rangle }^2+3{\left\langle b_1,U\right\rangle }^2\right)\right. \\
&\quad +\left\langle a_2,U\right\rangle \left\langle b_1,U\right\rangle \bigg\},
\end{aligned}
 \]
\end{fleqn}
where $\{U,V\}$ is an orthonormal set in $\mathcal{P}$, with respect to the Riemannian metric $g$.
\end{theorem}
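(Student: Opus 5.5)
The plan mirrors the proof of Theorem~\ref{t3.8}, with $F^v$ in place of $F^c$. The key input is the flag curvature formula from \cite{4} (Theorem~6 there, together with its vertical-lift analogue). For a Berwald-type left-invariant Randers metric $\tilde F=\tilde\alpha+\tilde g(\tilde X,\cdot)$ on the Lie group $TG$, the flag curvature of a flag $(\tilde P,\tilde Y)$ with $\tilde P=\spann\{\tilde Y,\tilde Z\}$, $\tilde g$-orthonormal, is
\[
K^{\tilde F}(\tilde P,\tilde Y)=\frac{\tilde g\big(\tilde R(\tilde Z,\tilde Y)\tilde Y,\tilde Z\big)}{\big(1+\tilde g(\tilde X,\tilde Y)\big)^2}.
\]
This holds because, in the Berwald case, the Chern connection coincides with the Levi-Civita connection of $\tilde g$. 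The numerator is therefore the Riemannian sectional curvature $\tilde K(\tilde Y,\tilde Z)$, which Theorem~\ref{t3.2} already supplies. The Berwald hypothesis on $F^v$ means, by the second part of Corollary~\ref{t3.7}, that $X\in Z(\mathfrak g)\cap\spann\{a_1,b_2,a_2+b_1\}^\perp$, and in particular $X\in\mathcal P$ by Theorem~\ref{t3.5}(1).

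The remaining step is to evaluate the denominator $1+\tilde g(X^v,\tilde Y)$ for each flag pole. By the definition of the lifted metric, $\tilde g(X^v,e_i^c)=\tilde g(X^v,U^c)=0$ and $\tilde g(X^v,e_i^v)=g(X,e_i)$, while $\tilde g(X^v,U^v)=g(X,U)$. Hence the denominator equals $1$ when the flagpole is a complete lift. These are cases 1)--9), 16), 17), and the $U^c$ poles in 2)--5). For vertical flagpoles $e_i^v$ or $U^v$ the factor is $(1+g(X,e_i))^{-2}$ or $(1+g(X,U))^{-2}$; this covers cases 10)--15) and 18)--26). Since $X\in\mathcal P$, the factor $g(X,e_i)$ actually vanishes. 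I would keep it displayed anyway, as the statement does.

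Each case then reduces to substituting the appropriate entry of Theorem~\ref{t3.2}, using the symmetry $\tilde K(\tilde Y,\tilde Z)=\tilde K(\tilde Z,\tilde Y)$. For example, in case 7) one has $\tilde K(e_1^c,e_2^v)=\tfrac14\|a_2\|^2-\langle a_1,b_2\rangle$. Case 22) uses $\tilde K(U^c,V^v)$, and 23)--26) use $\tilde K(U^v,e_i^{v})$ and $\tilde K(U^v,e_i^c)$.

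The main obstacle is the cases where Theorem~\ref{t3.2} does not directly give the needed value. Case 16), for instance, lists $\tilde K(e_1^c,U^v)=-\tfrac14(7\langle a_1,U\rangle^2+3\langle a_2,U\rangle^2)-\langle a_2,U\rangle\langle b_1,U\rangle$, which differs from $\tilde K(U^v,e_1^c)$ in Theorem~\ref{t3.2}. Here I would not rely on sectional curvature alone. I would instead use the general formula from \cite{4}, as in the displayed proof of Theorem~\ref{t3.8}. That formula has the Riemannian part plus correction terms involving $[\,\cdot\,,\nabla\cdot]$, $\nabla\ad^*$ and $[[\cdot,\cdot],\cdot]$ of the base data. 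I would evaluate those corrections with the $\ad^*$ formulas from the proof of Lemma~\ref{t3.1} and the connection of Proposition~\ref{t2.22}. This is the same computation as cases 7) and 8) of Theorem~\ref{t3.8}, so I would reuse those intermediate results. The pure $\mathcal P$ cases 4), 5) and 22) follow from the $\tilde K(u^\cdot,v^\cdot)$ entries and need no correction terms.
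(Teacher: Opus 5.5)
Your reduction matches the paper's (Theorem~7 of \cite{4} applied to the base data). For a Berwald Randers metric the flag curvature is the $\tilde g$-sectional curvature of $\tilde P$ times $\tilde g(\tilde Y,\tilde Y)/F^v(\tilde Y)^2$, and reading off Theorem~\ref{t3.2} gives every item except 16), 17), 25), 26). For those four your plan fails, and it cannot be repaired, because the stated values are wrong.

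By your own formula the numerator is the sectional curvature of the plane, which is symmetric. So there is no $\tilde K(e_1^c,U^v)$ distinct from $\tilde K(U^v,e_1^c)$, and nothing to add on top of it. The extra terms in the expression quoted in the proof of Theorem~\ref{t3.8} only rewrite $\tilde K(Y^c,Z^v)$ in base data, so a correct evaluation must reproduce Theorem~\ref{t3.2}. That entry is right. The table of Lemma~\ref{t3.1} gives $\tilde\nabla_{U^v}U^v=0$, $\tilde\nabla_{e_1^c}U^v=-\frac12(\langle a_1,U\rangle e_1^v+\langle a_2,U\rangle e_2^v)$ and $[e_1^c,U^v]=[e_1,U]^v$. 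Hence $\tilde R(e_1^c,U^v)U^v=-\frac34\big(\langle a_1,U\rangle\ad^*_Ue_1+\langle a_2,U\rangle\ad^*_Ue_2\big)^c$, and the numerator is $-\frac34(\langle a_1,U\rangle^2+\langle a_2,U\rangle^2)$. This agrees with Lemma~\ref{t4.4}(21), and Lemma~\ref{t4.4}(25) likewise gives $-\frac34(\langle b_1,U\rangle^2+\langle b_2,U\rangle^2)$ for 17), 26).

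The printed values differ from these by $-g([[e_1,U],U],e_1)=-(\langle a_1,U\rangle^2+\langle a_2,U\rangle\langle b_1,U\rangle)$, respectively $-g([[e_2,U],U],e_2)$. That is exactly the effect of the sign of the last term $-\frac12 g([[Y,Z],Z],Y)$ in the quoted expression. Expanding $K(Y,Z)$ with Milnor's formula shows the expression equals $\tilde K(Y^c,Z^v)$ identically when that term is $+\frac12 g([[Y,Z],Z],Y)$. For $Z=e_2$ the term vanishes because $[e_1,e_2]=0$, which is why the displayed case of Theorem~\ref{t3.8} comes out right. Reusing cases 7), 8) of Theorem~\ref{t3.8}, as you propose, would just import the error.

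Here is a concrete counterexample. Take $\mathfrak g=\spann\{e_1,e_2,u,z\}$ orthonormal with $[u,e_1]=e_1$, $[u,e_2]=e_2$, $z$ central, and $X=xz$ with $0<|x|<1$. Then $a_1=b_2=u$, $a_2=b_1=0$, $f_1=f_2=0$, and $X^v$ is $\tilde g$-parallel, so $F^v$ is Berwald. Yet $K^{F^v}(\spann\{e_1^c,u^v\},e_1^c)=\tilde K(e_1^c,u^v)=-\frac34$, whereas 16) claims $-\frac74$; the same happens for 17), 25), 26). The correct entries are $-\frac34(\langle a_1,U\rangle^2+\langle a_2,U\rangle^2)$ and $-\frac34(\langle b_1,U\rangle^2+\langle b_2,U\rangle^2)$, divided by $(1+g(X,U))^2$ when the flagpole is $U^v$.

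There is also a smaller gap. $X\in\mathcal P$ does not follow from Theorem~\ref{t3.5}(1), which concerns $F$, not $F^v$. Nor does it follow from the condition in Corollary~\ref{t3.7}(2): in Example~\ref{t3.11}, $e_2$ is central and orthogonal to $a_1,b_2,a_2+b_1$. Argue directly instead. By \eqref{Levi-Civita for tangent general}, $X^v$ is parallel iff $[Y,X]=\ad^*_YX$ and $\ad^*_YX+\ad^*_XY=0$ for all $Y$. These force $\langle X,[Y,W]\rangle=0$ for all $Y,W$, hence $X\perp\mathfrak g'$ and $X\in Z(\mathfrak g)$. You need $g(X,e_i)=0$, since 1)--3) list the flagpoles $e_i^v$ without the factor $(1+g(X,e_i))^{-2}$.
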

\begin{proof}
The proof is available by referring to Theorem 7 in \cite{4} and analogous computations to the proof of the previous theorem.
\end{proof}

Hence, taking into account the assumptions of the previous two theorems and the structure of the Lie algebra described, we have:
\begin{coro}\label{t3.10}
The tangent bundle of a Lie group with a two-dimensional derived subalgebra has positive, negative, and zero flag curvatures at every point.
\end{coro}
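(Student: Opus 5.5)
The plan is to read off the three required signs directly from the explicit flag curvature formulas of Theorems~\ref{t3.8} and \ref{t3.9}, exactly as Corollary~\ref{t3.3}(1) is read off from Theorem~\ref{t3.2}. Since $F^c$ and $F^v$ are left-invariant, it suffices to exhibit flags at the identity. Nonzero flag curvatures can then be transported to every point by left translation, and zero flag curvatures also follow pointwise.

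\emph{Zero.} Under the standing assumptions, $\dim\mathcal{P}=n-2\geq 1$ (in fact $n\ge 4$ if a Berwald lift exists, by Remark~\ref{t3.6} and Corollary~\ref{t3.7}). Take any unit $U\in\mathcal{P}$ and the flag $\tilde P=\spann\{U^c,U^v\}$. Item 15 of Theorem~\ref{t3.8} and item 5 of Theorem~\ref{t3.9} give flag curvature $0$, independently of the structure constants. When $n-2\ge 2$, the flag $\spann\{U^v,V^v\}$ works as well.

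\emph{Negative.} For $\tilde P=\spann\{e^c_1,e^v_1\}$, item 2 of Theorem~\ref{t3.8} (respectively item 6 of Theorem~\ref{t3.9}) gives $-\frac34\|a_1\|^2$. Likewise $\spann\{e^c_2,e^v_2\}$ gives $-\frac34\|b_2\|^2$, and $\spann\{e^c_1,U^v\}$ gives $-\frac14(7\langle a_1,U\rangle^2+3\langle a_2,U\rangle^2)-\langle a_2,U\rangle\langle b_1,U\rangle$. I must show that at least one of these is strictly negative.

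\emph{Positive.} For $\spann\{e^v_1,U^v\}$, item 9 of Theorem~\ref{t3.8} (or item 23 of Theorem~\ref{t3.9}) gives $\frac14(\langle a_1,U\rangle^2+\langle b_1,U\rangle^2+\|f_1(U)\|^2)\ge0$, and similarly for $e^v_2$. It remains to show strict positivity for some choice of $U$ and of $e_1$ or $e_2$.

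The main obstacle is excluding the degenerate case in which all of these quantities vanish. The key step is the observation that if $a_1,a_2,b_1,b_2$ were all zero and $f_1=f_2=0$, then $[\mathfrak g,\mathfrak g]=0$, contradicting $\dim\mathfrak g'=2$. Hence some of these data is nonzero. Choosing $U$ along the relevant nonzero vector (or in the image of $f_i$) makes one of the positive expressions above strictly positive. If $a_1\ne0$ or $b_2\ne0$, a negative flag is immediate. Otherwise I use $\spann\{e^c_1,U^v\}$ with $\langle a_2,U\rangle\ne0$, taking $U$ to avoid making the cross term dominate; by Cauchy--Schwarz, $b_1=\lambda a_2$. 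If instead only $f_1$ or $f_2$ is nonzero, I use $\spann\{U^c,V^c\}$ (item 12 or item 4), which gives a negative value when $\langle f_i(U),V\rangle\ne0$. I expect this case analysis, ensuring that negativity survives the indefinite cross term $\langle a_2,U\rangle\langle b_1,U\rangle$, to be the delicate part. As a fallback, I would choose among the $e_1$- and $e_2$-versions (items 7 and 8) according to the sign of $\lambda$.
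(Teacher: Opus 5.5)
Your zero and positive cases are fine, but the negative case has a genuine gap. Take $a_1=b_2=0$, $f_1=f_2=0$, $a_2\neq 0$ and $b_1=\lambda a_2$, and put $t=\langle a_2,U\rangle$.

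\begin{itemize}
\item Items 7 and 8 of Theorem~\ref{t3.8} become $-(\frac34+\lambda)t^2$ and $-\lambda(\frac34\lambda+1)t^2$.
\item The cross term is proportional to the main term for every $U$, so no choice of $U$ helps.
\item Both values are $\ge 0$ for $-\frac43\le\lambda\le-\frac34$. Switching between the $e_1$- and $e_2$-versions according to the sign of $\lambda$ therefore fails.
\end{itemize}

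At $\lambda=-1$ it is worse. This is $[u_1,e_1]=e_2$, $[u_1,e_2]=-e_1$, the algebra $\mathfrak{e}(2)$, possibly $\oplus\,\mathbb{R}^k$. There every $X\in\mathcal P$ with $\|X\|<1$ makes $F$ Berwald, and every such $X\perp u_1$ makes $F^v$ Berwald. Yet every flag curvature listed in Theorems~\ref{t3.8} and \ref{t3.9} is $\ge 0$ in this example. So no case analysis on those formulas as printed can produce a negative value. For $n=3$ this example also contradicts Remark~\ref{t3.6}(1), so your ``$n\ge 4$'' is not justified for $F^c$; that part is harmless for the zero case.

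The missing idea is that the printed items 7 and 8 of Theorem~\ref{t3.8} (and 16, 17, 25, 26 of Theorem~\ref{t3.9}) are inconsistent with Theorem~\ref{t3.2}. If $\beta$ is $\alpha$-parallel, the spray of $\alpha+\beta$ is that of $\alpha$ and $R(u,y)X=0$. The Randers fundamental tensor then gives, for $\alpha$-orthonormal $y,u$,
$K^{F}(P,y)=K^{\alpha}(P)/(1+\beta(y))^2$.

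Applied to $F^c$ and $F^v$, this has two consequences:
\begin{itemize}
\item Every flag curvature has the sign of the $\tilde g$-sectional curvature of $\tilde P$.
\item With flagpole $e^c_1$, where $\tilde g(X^c,e^c_1)=\tilde g(X^v,e^c_1)=0$, the flag curvature is exactly $\tilde K(e^c_1,U^v)=-\frac34(\langle a_1,U\rangle^2+\langle a_2,U\rangle^2)$, with no cross term.
\end{itemize}
This value agrees with Theorem~\ref{t3.2} and item 21 of Lemma~\ref{t4.4}. It also follows from Milnor's formula, where only the term $-\frac34|[e_1,U]|^2$ survives.

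With the correct values the negative case is immediate:
\begin{itemize}
\item If $a_1$ or $a_2\neq0$, use $\spann\{e^c_1,U^v\}$.
\item If $b_1$ or $b_2\neq0$, use $\spann\{e^c_2,U^v\}$, whose value is $-\frac34(\langle b_1,U\rangle^2+\langle b_2,U\rangle^2)$.
\item Otherwise some $f_i\neq0$; take $\spann\{U^c,V^c\}$ with $V=f_i(U)/\|f_i(U)\|$.
\end{itemize}
In the $\mathfrak{e}(2)$ example this gives $-\frac34$ on $\spann\{e^c_1,u^v_1\}$, so the corollary itself survives. However, the step you flagged as delicate cannot be carried out from Theorems~\ref{t3.8} and \ref{t3.9} as stated.
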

Finally, we provide an example of a family of Lie groups with two-dimensional derived subalgebras and compute some geometric quantities on the tangent Lie group.

\begin{example}\label{t3.11}
Let $G$ be a five-dimensional Lie group with Lie algebra $\mathfrak{g}=\spann\left\{e_1,e_2,Y_1,Y_2,Y_3\right\}$ defined by the following non-zero brackets:
\[\left[Y_1,Y_2\right]=e_1,  \ \ \  \left[Y_1,e_1\right]=e_2.\]
Also, Let $g=\left\langle \cdot ,
\cdot \right\rangle $ be the left-invariant Riemannian metric on it such that $\left\{e_1,e_2,Y_1,Y_2,Y_3\right\}$ is an orthonormal basis. So we have ${\mathfrak{g}}'= \spann \left\{e_1,e_2\right\}$ and $\mathcal{P}= \spann \left\{Y_1,Y_2,Y_3\right\}$. Easily we can see that $a_2=Y_1$, $a_1=b_1=b_2=0$ and the matrix representations of $f_1$ and $f_2$ with respect to the basis $\left\{Y_1,Y_2,Y_3\right\}$ are as follows:
\[f_1=\begin{pmatrix}
0 & -1 & 0 \\
1 & 0 & 0 \\
0 & 0 & 0 \end{pmatrix}
, \qquad f_2= \begin{pmatrix}
0 & 0 & 0 \\
0 & 0 & 0 \\
0 & 0 & 0 \end{pmatrix}.
\]
Now, using the complete and vertical lifts, we construct a natural Riemannian metric $\tilde{g}$ on $TG$.
Then, by Theorems~\ref{t3.2} and \ref{t3.4}, the sectional and Ricci curvatures in various cases are obtained as follows:
\begin{fleqn}
\begin{align*}
\tilde{K}\left(e^c_1,e^c_2\right)
& =\tilde{K}\left(e^v_1,e^v_2\right)= \tilde{K}\left(e^c_1,e^v_2\right)
=\tilde{K}\left(Y^c_2,e^c_1\right)
=\tilde{K}\left(Y^c_2,e^v_1\right)
=\tilde{K}\left(Y^c_1,e^c_2\right)
=\tilde{K}\left(Y^c_1,e^v_2\right)
\\
&
=\tilde{K}\left(Y^v_1,e^v_1\right)
=\tilde{K}\left(Y^v_2,e^v_1\right)
=\tilde{K}\left(Y^v_1,e^v_2\right)=\frac{1}{4},
\end{align*}
\begin{align*}
&
\tilde{K}\left(Y^c_1,e^c_1\right)=\tilde{K}\left(Y^c_1,e^v_1\right)=-\frac{1}{2} , \qquad \tilde{K}\left(Y^c_1,Y^c_2\right)
=\tilde{K}\left(Y^c_1,Y^v_2\right)
=\tilde{K}\left(Y^v_1,e^c_1\right)=-\frac{3}{4},
\end{align*}
\begin{align*}
\tilde{K}\left(e^c_1,e^v_1\right)
& =\tilde{K}\left(e^v_1,e^c_2\right)
=\tilde{K}\left(e^c_2,e^v_2\right)
=\tilde{K}\left(Y^c_1,Y^c_3\right)
=\tilde{K}\left(Y^c_1,Y^v_3\right)
=\tilde{K}\left(Y^c_2,Y^c_3\right)
=\tilde{K}\left(Y^c_2,Y^v_3\right)
\\
& =\tilde{K}\left(Y^c_3,e^c_1\right)
=\tilde{K}\left(Y^c_3,e^v_1\right)
=\tilde{K}\left(Y^c_2,e^c_2\right)
=\tilde{K}\left(Y^c_2,e^v_2\right)
=\tilde{K}\left(Y^c_3,e^c_2\right)
=\tilde{K}\left(Y^c_3,e^v_2\right)
\\
& =\tilde{K}\left(Y^v_2,e^c_1\right)
=\tilde{K}\left(Y^v_3,e^c_1\right)
=\tilde{K}\left(Y^v_3,e^v_1\right)
=\tilde{K}\left(Y^v_2,e^v_2\right)
=\tilde{K}\left(Y^v_3,e^v_2\right)=0,
\end{align*}
\[\tilde{K}\left(u^v,v^v\right)=\tilde{K}\left(u^c,u^v\right)=\tilde{K}\left(u^v,e^c_2\right)=0, \qquad \forall u,v\in \left\{Y_1,Y_2,Y_3\right\}.\]
\end{fleqn}
Also, considering that,
\[tr\left(f^2_1\right)=-1 , \qquad tr\left(f^2_2\right)=tr\left(f_1of_2\right)=tr\left(f_2of_1\right)=0,\]
and some computation, we obtain:
\begin{align*}
& \tilde{r}\left(e^c_1\right)=-\frac{3}{4} , \quad \tilde{r}\left(e^c_2\right)=\frac{1}{2} , \quad \tilde{r}\left(e^v_1\right)=0 , \quad \tilde{r}\left(e^v_2\right)=1 , \quad \tilde{r}\left(Y^c_1\right)=-2 ,\\
& \tilde{r}\left(Y^c_1\right)=-2 , \quad \tilde{r}\left(Y^c_2\right)=-1 , \quad \tilde{r}\left(Y^c_3\right)=0 , \quad \tilde{r}\left(Y^v_1\right)=-1 , \quad \tilde{r}\left(Y^v_2\right)=-\frac{1}{2} ,\\
& \tilde{r}\left(Y^v_3\right)=0 , \quad \widetilde{\Ric }\left(e^c_1,e^2_2\right)
=\widetilde{\Ric }\left(e^v_1,e^v_2\right)=0.
\end{align*}
The remaining Ricci curvatures are also equal to zero.

If $G$ is equipped with a left-invariant Berwald-type Randers metric $F$ defined by $g$ and a left-invariant vector field $X\neq 0$ with $\|X\|_g<1$, then for instance, taking the transverse vector $U=Y_1$ we obtain
\begin{align*}
& \begin{cases}
\tilde{P}= \spann \left\{e^c_1,Y^v_1\right\} \\
\tilde{P}= \spann \left\{Y^v_1,e^c_1\right\} \end{cases}
\to K^{F^c}\left(\tilde{P},e^c_1\right)=K^{F^c}\left(\tilde{P},Y^v_1\right)=-\frac{3}{4},
\\[1ex]
& \begin{cases}
\tilde{P}= \spann \left\{e^c_2,Y^c_1\right\} \\
\tilde{P}= \spann \left\{e^v_2,Y^c_1\right\} \end{cases}
 \to K^{F^c}\left(\tilde{P},e^v_2\right)=K^{F^c}\left(\tilde{P},e^c_2\right)=\frac{1}{4}.
\end{align*}
Similarly, the other flag curvatures with respect to $F^c$ are obtained.

Now, assuming that $G$ is equipped with a left-invariant Randers metric $F$ defined by a left-invariant Riemannian metric $g$ and a left-invariant vector field $X\neq 0$ with the condition $\|X\|_g<1$ such that the left-invariant Randers metric $F^v$ is of Brewald type, then for the flag curvature with respect to $F^v$ on $TG$, taking $U=Y_1$ as the flagpole and $V=Y_2$ as the transverse vector, we have:
\[\tilde{P}= \spann \left\{Y^v_1,Y^c_2\right\} \longrightarrow K^{F^v}\left(\tilde{P},Y^v_1\right)=-\frac{3}{4{\left(1+g\left(X,Y_1\right)\right)}^2}.\]
Computations for the remaining cases are similar.
\end{example}

\section{\bfseries Addendum}
\label{s4}
This section presents further results and computations for a Lie group
$G$ whose commutator subgroup has dimension two, endowed with a left-invariant Riemannian metric $g$, as well as for its tangent Lie group.

\begin{remark}\label{t4.1}
We mention that a vector field $U$ on a Lie group $G$ is a geodesic vector field if $ \nabla _UU=0$ at all points of $G$. It is clear that for any $X\in \mathcal{P}$, we have $ \nabla _XX=0$. Hence, every $X\in \mathcal{P}$ is a geodesic vector. If $X=\lambda e_1+\mu e_2\in{\mathfrak{g}}'$, then $X$ is a geodesic vector if and only if, $\lambda^2a_1+\lambda \mu (a_2+b_1)+\mu^2b_2=0$.
\end{remark}
\begin{remark}
Easily we can see that if $X\in{\mathfrak{g}}'$ then, $\tr\ad_X=0$. Also if $X\in\mathcal{P}$, then $\tr\ad_X=a_1+b_2$. Hence, $\tr\ad_X=0$ if and only if $a_1=-b_2$. We recall that a Lie algebra $\mathfrak{g}$ is called unimodular if $\tr\ad_X=0$ for every $X\in\mathfrak{g}$. Therefore, a Lie group $G$ with a two-dimensional derived subalgebra ${\mathfrak{g}}'$ is unimodular if and only if $a_1=-b_2$. As a special case, if $G$ is equipped with a left-invariant Riemannian metric such that $e_1$, $e_2$ are geodesic vectors, then $G$ is unimodular.
\end{remark}
\begin{prop}\label{t4.3}
A Lie group $G$ with a two-dimensional commutator subgroup does not admit a bi-invariant Riemannian metric.
\end{prop}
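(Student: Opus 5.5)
Suppose, towards a contradiction, that $g$ is a bi-invariant Riemannian metric on $G$. The plan is to use the standard characterization: a left-invariant metric is bi-invariant if and only if $\ad_X$ is skew-symmetric for every $X\in\mathfrak{g}$, i.e.
\[
g\big([X,Y],Z\big)+g\big(Y,[X,Z]\big)=0 \qquad \forall X,Y,Z\in\mathfrak{g}.
\]
We may choose the decomposition $\mathfrak{g}=\mathfrak{g}'\oplus\mathcal{P}$, the orthonormal basis $\{e_1,e_2\}$ of $\mathfrak{g}'$, and the data $a_1,a_2,b_1,b_2,f_1,f_2$ with respect to this $g$, as in Section~\ref{s2}.

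First we test skew-symmetry of $\ad_u$ for $u\in\mathcal{P}$ on $\mathfrak{g}'$. From $[u,e_1]=\langle a_1,u\rangle e_1+\langle a_2,u\rangle e_2$ and $[u,e_2]=\langle b_1,u\rangle e_1+\langle b_2,u\rangle e_2$, skew-symmetry gives $\langle a_1,u\rangle=\langle b_2,u\rangle=0$ and $\langle a_2,u\rangle=-\langle b_1,u\rangle$ for all $u\in\mathcal{P}$. Hence $a_1=b_2=0$ and $b_1=-a_2$.

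Next we use skew-symmetry of $\ad_{e_1}$ and $\ad_{e_2}$; one could equally use bi-invariance of the Levi-Civita formula $\nabla_XY=\frac12[X,Y]$ and compare with Proposition~\ref{t2.22}. Since $[e_1,e_2]=0$, for $u,v\in\mathcal{P}$ we get
\[
g([e_1,u],v)+g(u,[e_1,v])=0
\]
trivially, because both brackets lie in $\mathfrak{g}'\perp\mathcal{P}$. The informative condition is
\[
g([e_1,u],e_2)+g(u,[e_1,e_2])=0,
\]
which gives $-\langle a_2,u\rangle=0$, so $a_2=0$ and therefore $b_1=0$ as well. Similarly, $g([u,v],e_1)+g(v,[u,e_1])=0$ gives $\langle f_1(u),v\rangle+0=0$, because $[u,e_1]\in\mathfrak{g}'\perp v$. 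Thus $f_1=0$, and likewise $f_2=0$.

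With all of $a_1,a_2,b_1,b_2,f_1,f_2$ vanishing, every bracket in the description of Section~\ref{s2} is zero. Then $\mathfrak{g}$ is abelian and $\mathfrak{g}'=0$, contradicting $\dim\mathfrak{g}'=2$.

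The main point to handle carefully is that bi-invariance must be tested against an arbitrary left-invariant metric, not a fixed one. This is resolved by building the basis adapted to that very metric, which the setup of Section~\ref{s2} allows. An alternative check is that a bi-invariant metric has $K\ge0$, whereas by Proposition~\ref{t2.22} we have $K(u,v)=-\frac34(\cdots)\le0$. That route alone is not decisive, though, so the bracket argument above is the one to use.
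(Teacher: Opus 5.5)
Your proof is correct and follows essentially the same route as the paper. Both use the ad-skewness characterization of bi-invariance (equivalently $\langle X,[Y,Z]\rangle=\langle [X,Y],Z\rangle$) with the data adapted to the given metric, force $a_1=b_2=0$, $b_1=-a_2$, $a_2=b_1=0$ and $f_1=f_2=0$, and conclude that $\mathfrak{g}$ is abelian.

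The fact $[e_1,e_2]=0$, which you assert without justification (as the paper does implicitly), is only needed in that final step. It holds because a two-dimensional derived algebra is nilpotent, hence abelian. Alternatively, once $\ad_u=0$ for all $u\in\mathcal{P}$ one has $\mathfrak{g}'=\spann\{[e_1,e_2]\}$, of dimension at most one. So this is not a gap.
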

\begin{proof}
A left-invariant Riemannian metric $g=\left\langle \cdot , \cdot \right\rangle $ on a connected Lie group $G$ is bi-invariant if and only if
\[\left\langle X, [Y,Z] \right\rangle =\left\langle [X,Y] ,Z\right\rangle \qquad \forall X,Y,Z\in \mathfrak{g}.\]
Suppose $\left\langle \cdot , \cdot \right\rangle $ is a bi-invariant Riemannian metric on $G$. For any $u,v\in \mathcal{P}$ and with straightforward computations, we obtain:
\begin{align*}
& \left\langle [u,v] ,e_1\right\rangle =\left\langle u,\left[v,e_1\right]\right\rangle \Longleftrightarrow \left\langle f_1 (u) ,v\right\rangle =0,\\
& \left\langle [u,v] ,e_2\right\rangle =\left\langle u,\left[v,e_2\right]\right\rangle \Longleftrightarrow \left\langle f_2 (u) ,v\right\rangle =0, \\
& \left\langle \left[e_1,e_1\right],u\right\rangle =\left\langle e_1,\left[e_1,u\right]\right\rangle \Longleftrightarrow -\left\langle a_1,u\right\rangle =0\Longleftrightarrow a_1=0, \\
& \left\langle \left[e_2,e_2\right],u\right\rangle =\left\langle e_2,[e_2,u]\right\rangle \Longleftrightarrow -\left\langle b_2,u\right\rangle =0\Longleftrightarrow b_2=0, \\
& \left\langle e_1,\left[u,e_2\right]\right\rangle =\left\langle \left[e_1,u\right],e_2\right\rangle \Longleftrightarrow b_1=-a_2, \\
& \left\langle \left[e_1,e_2\right],u\right\rangle =\left\langle e_1,\left[e_2,u\right]\right\rangle \Longleftrightarrow -\left\langle b_1,u\right\rangle =0\Longleftrightarrow b_1=0, \\
& \left\langle \left[e_2,e_1\right],u\right\rangle =\left\langle e_2,\left[e_1,u\right]\right\rangle \Longleftrightarrow -\left\langle a_2,u\right\rangle =0\Longleftrightarrow a_2=0.
\end{align*}
But this implicitly means that $G$ is an abelian Lie group, which is a contradiction.
\end{proof}

At the end of this section, we present the Riemann curvature tensor on the tangent Lie group $TG$, which is used in the process of computing the sectional curvatures in Theorem~\ref{t3.2} as well.
\begin{lemma}\label{t4.4}
Under the assumptions of Lemma~\ref{t3.1}, the Riemann curvature tensor $\tilde{R}$ on the Riemannian manifold $(TG,\tilde{g})$ is given by:
\begin{fleqn}
\[1) \; \tilde{R}\left(e^c_1,e^c_2\right)e^c_2=\left(-\left\langle a_1,b_2\right\rangle +\frac{1}{4}{\left\|a_2+b_1\right\|}^2\right)e^c_1-\frac{1}{4}{\left(2f_1\left(b_2\right)-f_2\left(a_2\right)-f_2\left(b_1\right)\right)}^c,\]
\[2)\;\tilde{R}\left(e^c_1,e^v_2\right)e^v_2=\left(-\left\langle a_1,b_2\right\rangle +\frac{1}{4}{\left\|a_2\right\|}^2\right)e^c_1-\frac{1}{4}\left\langle 2b_1+a_2,b_2\right\rangle e^c_2-\frac{1}{4}{\left(2f_1\left(b_2\right)-f_2\left(a_2\right)\right)}^c,\]
\[3)\;\tilde{R}\left(e^c_1,e^v_1\right)e^v_1=-\frac{1}{4}\left(3{\left\|a_1\right\|}^2e^c_1+\left\langle b_1+2a_2,a_1\right\rangle e^c_2+{\left(f_1\left(a_1\right)\right)}^c\right), \]
\[4)\;\tilde{R}\left(e^v_1,e^v_2\right)e^v_2=\left(-\left\langle a_1,b_2\right\rangle +\frac{1}{4}{\left\|a_2+b_1\right\|}^2\right)e^v_1-\frac{1}{4}{\left(2f_1\left(b_2\right)-f_2\left(a_2\right)-f_2\left(b_1\right)\right)}^v, \]
\[5)\;\tilde{R}\left(e^v_1,e^c_2\right)e^c_2=\left(-\left\langle a_1,b_2\right\rangle +\frac{1}{4}{\left\|b_1\right\|}^2\right)e^v_1-\frac{1}{4}\left\langle b_1+2a_2,b_2\right\rangle e^v_2-\frac{1}{2}{\left(f_1\left(b_2\right)\right)}^v, \]
\[6)\;\tilde{R}\left(e^c_2,e^v_2\right)e^v_2=-\frac{1}{4}\left(\left\langle 2b_1+a_2,b_2\right\rangle e^c_1+3{\left\|b_2\right\|}^2e^c_2+{\left(f_2\left(b_2\right)\right)}^c\right), \]
\[7)\;\tilde{R}\left(e^c_2,e^v_1\right)e^v_1=-\frac{1}{4}\left\langle a_1,b_1+2a_2\right\rangle e^c_1+\left(\frac{1}{4}{\left\|b_1\right\|}^2-\left\langle a_1,b_2\right\rangle \right)e^c_2+{\left(\frac{1}{4}f_1\left(b_1\right)-\frac{1}{2}f_2\left(a_1\right)\right)}^c, \]
\[8)\;\tilde{R}\left(e^c_2,e^v_2\right)e^v_2=-\frac{1}{4}\left\langle b_2,a_2+2b_1\right\rangle e^c_1-\frac{3}{4}{\left\|b_2\right\|}^2e^c_2-\frac{1}{4}{\left(f_2\left(b_2\right)\right)}^c, \]
\[9)\;\tilde{R}\left(e^v_2,e^c_2\right)e^c_2=-\frac{1}{4}\left\langle {2a}_2+b_1,b_2\right\rangle e^v_1-\frac{3}{4}{\left\|b_2\right\|}^2e^v_2-\frac{1}{2}{\left(f_2\left(b_2\right)\right)}^v, \]
\[10)\;\tilde{R}\left(e^v_2,e^v_1\right)e^v_1=\left(\frac{1}{4}{\left\|a_2+b_1\right\|}^2-\left\langle a_1,b_2\right\rangle \right)e^v_2+\frac{1}{4}{\left(f_1\left(a_2\right)\right)}^v+\frac{1}{4}{\left(f_1\left(b_1\right)\right)}^v-\frac{1}{2}{\left(f_2\left(a_1\right)\right)}^v, \]
\[11)\;\tilde{R}\left(e^v_2,e^c_1\right)e^c_1=-\frac{1}{4}\left\langle a_1,a_2+2b_1\right\rangle e^v_1+\left(\frac{1}{4}{\left\|a_2\right\|}^2-\left\langle a_1,b_2\right\rangle \right)e^v_2-\frac{1}{2}{\left(f_2\left(a_1\right)\right)}^v, \]
\[12)\;\tilde{R}\left(e^c_2,e^c_1\right)e^c_1=\left(-\left\langle a_1,b_2\right\rangle +\frac{1}{4}{\left\|a_2+b_1\right\|}^2\right)e^c_2-\frac{1}{4}{\left(2f_2\left(a_1\right)-f_1\left(a_2\right)-f_1\left(b_1\right)\right)}^c, \]
\[
13)\;
\begin{aligned}[t]
 \tilde{R}\left(u^c,v^c\right)v^c
 & =\left(\frac{1}{4}\left\langle f_2 (u) ,v\right\rangle \left\langle b_1+3a_2,v\right\rangle +\left\langle f_1 (u) ,v\right\rangle \left\langle a_1,v\right\rangle \right)e^c_1
 \\
& +\left(\frac{1}{4}\left\langle f_1 (u) ,v\right\rangle \left\langle a_2+3b_1,v\right\rangle +\left\langle f_2 (u) ,v\right\rangle \left\langle b_2,v\right\rangle \right)e^c_2
\\
& +\frac{3}{4}{\Big(\left\langle f_1 (u) ,v\right\rangle f_1\left(v\right)+\left\langle f_2 (u) ,v\right\rangle f_2(v)\Big)}^c,
\end{aligned}
\]
\[14)\;
\begin{aligned}[t]
\tilde{R}\left(u^c,v^v\right)v^v
& =\frac{3}{4}\left(\left\langle f_1 (u) ,v\right\rangle \left\langle a_1,v\right\rangle +\left\langle f_2 (u) ,v\right\rangle \left\langle a_2,v\right\rangle \right)e^c_1
\\
& +\frac{3}{4}\left(\left\langle f_1 (u) ,v\right\rangle \left\langle b_1,v\right\rangle +\left\langle f_2 (u) ,v\right\rangle \left\langle b_2,v\right\rangle \right)e^c_2
\\
& +\frac{3}{4}{\left(\left\langle f_1 (u) ,v\right\rangle f_1\left(v\right)+\left\langle f_2 (u) ,v\right\rangle f_2\left(v\right)\right)}^c,
\end{aligned}
\]
\[15)\;
\begin{aligned}[t]
\tilde{R}\left(u^v,v^c\right)v^c
& =\left(-\frac{1}{4}\left\langle f_2 (u) ,v\right\rangle \left\langle b_1-a_2,u\right\rangle +\left\langle f_1 (u) ,v\right\rangle \left\langle a_1,v\right\rangle +\frac{1}{2}\left\langle f_2 (u) ,v\right\rangle \left\langle b_1+a_2,u\right\rangle \right)e^v_1
\\
& +\left(-\frac{1}{4}\left\langle f_1 (u) ,v\right\rangle \left\langle a_2-b_1,v\right\rangle +\left\langle f_2 (u) ,v\right\rangle \left\langle b_2,v\right\rangle +\frac{1}{2}\left\langle f_1 (u) ,v\right\rangle \left\langle b_1+a_2,v\right\rangle \right)e^v_2
\\
& +\frac{3}{4}\left\langle f_1 (u) ,v\right\rangle {\left(f_1\left(v\right)\right)}^v+\frac{3}{4}\left\langle f_2 (u) ,v\right\rangle {\left(f_2\left(v\right)\right)}^v,
\end{aligned}
\]

\[16\;)\tilde{R}\left(u^v,v^v\right)v^v=\tilde{R}\left(u^c,u^v\right)u^v=0,\]
\[17)\;
\begin{aligned}[t]
& \tilde{R}\left(u^c,e^c_1\right)e^c_1=
\\
& \qquad \;\;{\left(-\frac{1}{4}f^2_1 (u) -\left\langle a_1,u\right\rangle a_1+\frac{1}{4}\left\langle u,f_1\left(a_2+b_1\right)-2f_2\left(a_1\right)\right\rangle e_2+\frac{1}{4}\left\langle u,b_1-3a_2\right\rangle \left(a_2+b_1\right)\right)}^c,
\end{aligned}
\]
\[18)\;
\begin{aligned}[t]
& \tilde{R}\left(u^c,e^v_1\right)e^v_1=
\\
& {\left(\!-\frac{1}{4}\langle u,f_1(a_1)\rangle e_1\!+\!\frac{1}{4}\langle u,f_1(b_1)\!-2f_2(a_1)\rangle e_2\!+\!\frac{1}{4}\langle u,b_1-3a_2\rangle (a_2+b_1)-\langle a_1,u\rangle a_1\!-\!\frac{1}{4}f^2_1 (u)\! \right)}^c,
\end{aligned}
\]
\[19)\;\tilde{R}(u^c,e^c_2)e^c_2={\left(\!-\frac{1}{4}f^2_2 (u) \!-\! \langle b_2,u \rangle b_2\!+\!\frac{1}{4}\langle u,f_2(b_1+a_2)\!-\!2f_1(b_2)\rangle e_1\!+\frac{1}{4}\langle u,a_2\!-\!3b_1\rangle (a_2\!+b_1)\!\right)}^c,
\]
\[20)\;
\begin{aligned}[t]
&\tilde{R}\left(u^c,e^v_2\right)e^v_2=
\\
&{\left(\!-\frac{1}{4}f^2_2 (u) \!-\langle b_2,u\rangle b_2\!+\frac{1}{4}\langle u,f_2(a_2)-2f_1(b_2)\rangle e_1\!-\frac{1}{4}\langle u,f_2(b_2)\rangle e_2\!+\frac{1}{4}\langle u,a_2\!-3b_1\rangle (a_2+b_1)\!\right)}^c,
\end{aligned}
\]
\[21)\; \tilde{R}\left(u^v,e^c_1\right)e^c_1={\left(-\frac{1}{2}\left(\left\langle u,f_1\left(a_1\right)\right\rangle e_1+\left\langle u,f_2\left(a_1\right)\right\rangle e_2\right)-\frac{3}{4}\left(\left\langle a_1,u\right\rangle a_1+\left\langle a_2,u\right\rangle a_2\right)\right)}^v,\]
\[22)\; \tilde{R}\left(u^v,e^v_1\right)e^v_1={\frac{1}{4}\left(\left\langle u,f_1\left(a_2+b_1\right)-2f_2\left(a_1\right)\right\rangle e_2+\left\langle b_1,u\right\rangle b_1+\left\langle a_1,u\right\rangle a_1-f^2_1 (u) \right)}^v,\]
\[23)\; \tilde{R}\left(u^v,e^c_2\right)e^c_2={\left(-\frac{1}{2}\left(\left\langle u,f_1\left(b_2\right)\right\rangle e_1+\left\langle u,f_2\left(b_2\right)\right\rangle e_2\right)-\frac{3}{4}\left(\left\langle b_1,u\right\rangle b_1+\left\langle b_2,u\right\rangle b_2\right)\right)}^v, \]
\[24)\;\tilde{R}\left(u^v,e^v_2\right)e^v_2=\frac{1}{4}{\left(\left\langle u,f_2\left(a_2+b_1\right)-2f_1\left(b_2\right)\right\rangle e_1+\left\langle a_2,u\right\rangle a_2+\left\langle b_2,u\right\rangle b_2-f^2_2 (u) \right)}^v, \]
\[25)\;
\begin{aligned}[t]
\tilde{R}\left(e^c_2,u^v\right)u^v
& =-\frac{3}{4}\left(\left(\left\langle a_1,u\right\rangle \left\langle b_1,u\right\rangle +\left\langle a_2,u\right\rangle \left\langle b_2,u\right\rangle \right)e^c_1+\left({\left\langle b_1,u\right\rangle }^2+{\left\langle b_2,u\right\rangle }^2\right)e^c_2
\right.\\
& +\left\langle b_1,u\right\rangle {\left(f_1 (u) \right)}^c+\left\langle b_2,u\right\rangle {\left(f_2 (u) \right)}^c\Big),
\end{aligned}
\]
\[26)\;
\begin{aligned}[t]
\tilde{R}\left(e^c_2,u^c\right)u^c
\!&=\!\left(\!\frac{1}{2}\langle b_2,u\rangle \langle b_1\!-\!a_2,u\rangle \!+\!\frac{1}{4}\langle f_1 (u) ,f_2 (u) \rangle\!
-\!\langle a_1,u\rangle \langle b_1,u\rangle \!-\!\frac{1}{2}\langle b_2,u\rangle \langle b_1\!+\!a_2,u\rangle \!\right)e^c_1
\\
& +\!\left(\!-\frac{1}{2}\langle b_1,u\rangle \langle b_1+a_2,u\rangle\! -{\langle b_2,u\rangle }^2\!+\frac{1}{4}\left\langle b_1+a_2,u\right\rangle \left\langle a_2-b_1,u\right\rangle +\frac{1}{4}{\left\|f_2(u)\right\|}^2\right)e^c_2
\\
& -\frac{1}{4}\left\langle a_2+3b_1,u\right\rangle {\left(f_1 (u) \right)}^c-\left\langle b_2,u\right\rangle {\left(f_2 (u) \right)}^c,
\end{aligned}
\]
\[27)\;
\begin{aligned}[t]
\tilde{R}\left(e^v_2,u^v\right)u^v
&=\frac{1}{4}\left(\left\langle a_1,u\right\rangle \left\langle a_2,u\right\rangle +\left\langle b_1,u\right\rangle \left\langle b_2,u\right\rangle +\left\langle f_1 (u) ,f_2(u)\right\rangle \right)e^v_1\\
& +\frac{1}{4}\left({\left\langle a_2,u\right\rangle }^2+{\left\langle b_2,u\right\rangle }^2+{\left\|f_2 (u) \right\|}^2\right)e^v_2,
\end{aligned}
\]
\[28)\;
\begin{aligned}[t]
\tilde{R}\left(e^v_2,u^c\right)u^c
&\!=\!\left(\!\frac{1}{2}\langle b_2,u\rangle \langle b_1-a_2,u\rangle\! +\!\frac{1}{4}\langle f_1 (u) ,f_2 (u) \rangle \!-\!\langle a_1,u\rangle \langle b_1,u\rangle\! -\!\frac{1}{2}\langle b_2,u\rangle \langle b_1+a_2,u\rangle\! \right)\!e^v_1
\\
& +\left(\!-\frac{1}{2}\langle b_1,u\rangle \langle b_1+a_2,u\rangle \!-\!{\langle b_2,u\rangle }^2
\!+\!\frac{1}{4}\langle b_1+a_2,u\rangle \langle a_2-b_1,u\rangle \!+\!\frac{1}{4}{\|f_2 (u) \|}^2\right)e^v_2
\\
& -\frac{1}{4}\left\langle a_2+3b_1,u\right\rangle {\left(f_1 (u) \right)}^v-\left\langle b_2,u\right\rangle {\left(f_2 (u) \right)}^v,
\end{aligned}
\]
\[29)\;
\begin{aligned}[t]
\tilde{R}\left(e^v_1,u^v\right)u^v
&=\frac{1}{4}\left({\left\langle a_1,u\right\rangle }^2+{\left\langle b_1,u\right\rangle }^2+{\left\|f_1 (u) \right\|}^2\right)e^v_1
\\
& +\frac{1}{4}\Big(\left\langle a_1,u\right\rangle \left\langle a_2,u\right\rangle +\left\langle b_1,u\right\rangle \left\langle b_2,u\right\rangle +\left\langle f_1 (u) ,f_2 (u) \right\rangle \Big)e^v_2,
\end{aligned}
\]
\[30)\;
\begin{aligned}[t]
\tilde{R}(e^v_1,u^c)u^c
& =\left(\!-\frac{1}{2}\langle a_2,u\rangle \langle b_1+a_2,u\rangle -{\langle a_1,u\rangle }^2\!+\!\frac{1}{4}\langle b_1+a_2,u\rangle \langle b_1-a_2,u\rangle\! +\!\frac{1}{4}{\|f_1 (u)\|}^2\right)e^v_1
\\
& +\!\left(\!\frac{1}{2}\langle a_1,u\rangle \langle a_2\!-\!b_1,u\rangle \!+\!\frac{1}{4}\langle f_1 (u) ,f_2 (u) \rangle \!-\!\langle a_2,u\rangle \langle b_2,u\rangle\! -\!\frac{1}{2}\langle a_1,u\rangle \langle b_1+a_2,u\rangle \right)e^v_2
\\
& -\frac{1}{4}\langle b_1+3a_2,u\rangle {(f_2 (u))}^v-\langle a_1,u\rangle {(f_1 (u))}^v,
\end{aligned}
\]
\end{fleqn}
\end{lemma}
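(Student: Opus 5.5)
The plan is to compute $\tilde R$ directly from the definition
\[\tilde R(A,B)C=\widetilde\nabla_A\widetilde\nabla_BC-\widetilde\nabla_B\widetilde\nabla_AC-\widetilde\nabla_{[A,B]}C,\]
applied to the left-invariant fields $A,B,C\in\{e_1^c,e_1^v,e_2^c,e_2^v,u^c,u^v,v^c,v^v\}$. Every covariant derivative needed is read off from Table~\ref{j.1} of Lemma~\ref{t3.1}. The brackets come from the lift rules $[X^c,Y^c]=[X,Y]^c$, $[X^c,Y^v]=[X,Y]^v$ and $[X^v,Y^v]=0$, combined with the structure relations $[u,e_1]=\langle a_1,u\rangle e_1+\langle a_2,u\rangle e_2$, $[u,e_2]=\langle b_1,u\rangle e_1+\langle b_2,u\rangle e_2$ and $[u,v]=\langle f_1(u),v\rangle e_1+\langle f_2(u),v\rangle e_2$. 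A key point is that $\mathfrak g'$ is abelian, since it is two-dimensional and, being the derived algebra of a solvable algebra, nilpotent. Hence $[e_1,e_2]=0$ and $[e_1^c,e_2^c]=[e_1^c,e_2^v]=0$, so in items 1)--12) the bracket term disappears and only the two iterated derivatives remain.

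Each iterated derivative $\widetilde\nabla_A(\widetilde\nabla_BC)$ has inner term $\widetilde\nabla_BC$ equal to a lift of an element of $\mathfrak g'$ or of $\mathcal P$, for instance $a_1^c$, $\frac12(a_2+b_1)^c$, or $(f_1(u))^c$. To differentiate it again, we split its argument as $w=\langle w,e_1\rangle e_1+\langle w,e_2\rangle e_2+w_{\mathcal P}$ and use linearity of the table entries in the $\mathcal P$-slot; the table entries are linear in $u$ and $v$. For example, item 1) requires $\widetilde\nabla_{e_1^c}b_2^c$ and $\widetilde\nabla_{e_2^c}\frac12(a_2+b_1)^c$. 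These are obtained from the $e_i^c$-row, $u^c$-column entry with $u=b_2$ or $u=\frac12(a_2+b_1)$. Collecting the $e_1^c$ component gives $-\langle a_1,b_2\rangle+\frac14\|a_2+b_1\|^2$, and collecting the $\mathcal P$ components gives the stated combination of $f_1(b_2)$ and $f_2(a_2+b_1)$. The vertical items 4), 5), 9), 10), 11) follow the same pattern, using the mixed $c/v$ entries and noting which lift type each product of derivatives produces: two $v$'s give $c$, and a $c$ with a $v$ gives $v$.

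For items 13)--30), which involve $u,v\in\mathcal P$, the bracket term no longer vanishes. For example, $\widetilde\nabla_{[u^c,v^c]}v^c=\langle f_1(u),v\rangle\widetilde\nabla_{e_1^c}v^c+\langle f_2(u),v\rangle\widetilde\nabla_{e_2^c}v^c$, and terms like $[u^c,e_1^c]=(\langle a_1,u\rangle e_1+\langle a_2,u\rangle e_2)^c$ contribute linear combinations of table entries. The final results are simplified using skew-symmetry $\langle f_i(u),w\rangle=-\langle u,f_i(w)\rangle$, which turns $\sum_i\langle f_1(u),u_i\rangle f_1(u_i)$ into $-f_1^2(u)$, and the orthonormality of $u$ and $v$. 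Item 16) should follow from the fact that all relevant derivatives among $\mathcal P$-vertical fields lie in $\mathfrak g'^v$, and these derivatives cancel pairwise.

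The main obstacle is bookkeeping rather than any single conceptual step. The second-order terms $\widetilde\nabla_A\widetilde\nabla_BC$ feed $\mathcal P$-valued outputs such as $(f_1(u))^c$ back into the $u$-columns, which produces terms like $\langle a_1,f_1(u)\rangle$ and $f_1f_2$-cross terms, and keeping the $\frac12$ and $\frac14$ coefficients straight is error-prone. To manage this, I would first handle one representative in each of the classes $(\mathfrak g',\mathfrak g',\mathfrak g')$, $(\mathcal P,\mathfrak g',\mathfrak g')$, $(\mathfrak g',\mathcal P,\mathcal P)$ and $(\mathcal P,\mathcal P,\mathcal P)$ with full detail, for instance items 1), 17), 26) and 13). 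I would then obtain the remaining items by the symmetry $e_1\leftrightarrow e_2$, $a_1\leftrightarrow b_2$, $a_2\leftrightarrow b_1$, $f_1\leftrightarrow f_2$ and by the $c\leftrightarrow v$ pattern of the table. As consistency checks, I would verify that $\tilde g(\tilde R(A,B)B,A)$ reproduces Proposition~\ref{t2.22} in the cases where Corollary~\ref{t3.3} predicts equality.
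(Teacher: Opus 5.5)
Your plan is the paper's own proof. The paper likewise expands $\tilde R(A,B)C=\widetilde\nabla_A\widetilde\nabla_BC-\widetilde\nabla_B\widetilde\nabla_AC-\widetilde\nabla_{[A,B]}C$ using the entries of Table~\ref{j.1} and the lifted brackets, writes out only item 3), and calls the rest similar; your argument that $[e_1,e_2]=0$ correctly supplies a fact the paper uses tacitly. When you carry it out, note three things: the $(v^c,u^c)$ entry of Table~\ref{j.1} is misprinted (as printed it does not even depend on $u$) and must be replaced by $\widetilde\nabla_{v^c}u^c=(\nabla_vu)^c=\frac12\big(\langle f_1(v),u\rangle e_1^c+\langle f_2(v),u\rangle e_2^c\big)$, which is needed in items 13), 15), 17)--20), 26), 28), 30), including three of your four representatives; there is no $c\leftrightarrow v$ symmetry, because the $\frac12\ad^*$ and $-\frac12[\cdot,\cdot]$ corrections in the mixed entries and the vanishing of $[X^v,Y^v]$ survive in $\tilde R$ (compare 2) with 1), 18) with 17), or 3) with $\tilde R(e_1^c,e_1^c)e_1^c=0$), so apart from the relabeling $e_1\leftrightarrow e_2$, $a_1\leftrightarrow b_2$, $a_2\leftrightarrow b_1$, $f_1\leftrightarrow f_2$ the mixed items must be computed one by one; and item 15) comes out as $(R(u,v)v)^v$, so the two factors $\langle b_1\pm a_2,u\rangle$ in its printed $e_1^v$-coefficient should read $\langle b_1\pm a_2,v\rangle$.
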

\begin{proof}
For instance, we prove one case. The proofs of the remaining statements are similar. By the definition of the Riemann curvature tensor,
\begin{align*}
\tilde{R}\left(e^c_1,e^v_1\right)e^v_1
& ={\widetilde \nabla }_{e^c_1}{\widetilde \nabla }_{e^v_1}e^v_1-{\widetilde \nabla }_{e^v_1}{\widetilde \nabla }_{e^c_1}e^v_1-{\widetilde \nabla }_{[e^c_1,e^v_1]}e^v_1
\\
&={\widetilde \nabla }_{e^c_1}a^c_1-\frac{1}{2}{\widetilde \nabla }_{e^v_1}a^v_1=-\left\langle a_1,a_1\right\rangle e^c_1-\frac{1}{2}\left\langle a_2+b_1,a_1\right\rangle e^c_2-\frac{1}{2}{\left(f_1\left(a_1\right)\right)}^c
\\
& +\frac{1}{4}\left(\left\langle a_1,a_1\right\rangle e^c_1+\left\langle a_1,b_1\right\rangle e^c_2+{\left(f_1\left(a_1\right)\right)}^c\right)
\\
&
=-\frac{3}{4}{\left\|a_1\right\|}^2e^c_1-\frac{1}{4}\left\langle a_1,b_1+2a_2\right\rangle e^c_2-\frac{1}{4}\left(f_1{\left(a_1\right)}^c\right)\\
&=-\frac{1}{4}\left(3{\left\|a_1\right\|}^2e^c_1+\left\langle a_1,b_1+2a_2\right\rangle e^c_2+{\left(f_1\left(a_1\right)\right)}^c\right). \qedhere
\end{align*}
\end{proof}

{\large{\textbf{Acknowledgment.}}} We are grateful to the office of Graduate Studies of the University of Isfahan for their support.
\section*{Declarations}

{\textbf{Ethical Approval:} Not applicable.\\

{\textbf{Conflict of interest:} On behalf of all authors, the corresponding author states that there is no conflict of interest.\\

{\textbf{Authors contributions:}} Not applicable.\\

{\textbf{Funding:}} There is not any financial support. \\

{\textbf{Availability of data and materials:}} Data sharing does not apply to this article as no datasets were generated or analyzed during the current study.\\

\end{document}